\newcommand{\tr}{\mathop{\mathrm{tr}}}
\newcommand{\defeq}{\mathrel{\mathop:}=} 
\newcommand{\norm}[2][]{
  \left\| 
  \ifthenelse{\equal{#1}{}}{#2}{\vphantom{#1}\smash{#2}}
  \right\|}
\newcommand{\Econd}[2]{\mathbb{E}\left[\left.#1\;\right|\;#2\right]}
\newcommand{\ud}{\mathrm{d}}
\newcommand{\ball}{\overline{B}}
\newcommand{\R}{\mathbb{R}}
\newcommand{\F}{\mathcal{F}}
\renewcommand{\P}{\mathbb{P}}
\newcommand{\spc}[1]{\mathbb{#1}}
\newcommand{\charfun}[1]{\mathbbm{1}_{#1}}
\newcommand{\uarg}{\mathrel{\cdot}}
\newcommand{\acc}{\mathop{\mathrm{acc}}}
\newtheorem{theorem}{Theorem}
\newtheorem{corollary}[theorem]{Corollary}
\newtheorem{lemma}[theorem]{Lemma}
\newtheorem{proposition}[theorem]{Proposition}
\theoremstyle{definition}
\newtheorem{definition}[theorem]{Definition}
\newtheorem{assumption}[theorem]{Assumption}
\theoremstyle{remark}
\newcounter{saveenum}
\newtheorem{remark}[theorem]{Remark}
\newenvironment{prf-lemma-proposal}{
  \begin{proof}[\textbf{Proof of Lemma \ref{lemma:upper-proposal}}]
      }{\end{proof}}
\newenvironment{prf-prop-lower}{
  \begin{proof}[\textbf{Proof of Proposition \ref{prop:lower}}]
   }{\end{proof}}
\newenvironment{prf-upper-polynomial}{
  \begin{proof}[\textbf{Proof of Proposition
    \ref{prop:upper-polynomial}}]
  }{\end{proof}}
\newenvironment{prf-half-space}{
  \begin{proof}[\textbf{Proof of Lemma \ref{lemma:half-space}}]
      }{\end{proof}}
\newenvironment{prf-compact-stable}{
  \begin{proof}[\textbf{Proof of Theorem
    \ref{thm:compact-stable}}]
  }{\end{proof}}
\newenvironment{prf-super-exp-ergodicity}{
  \begin{proof}[\textbf{Proof of Theorem
    \ref{thm:super-exp-ergodicity}}]
  }{\end{proof}}
\date{\today}
\begin{document}
\sloppy
%%%%%%%%%%%%%%%%%%%%%%%%%%%%%%%%%%%%%%%%%%%%%%%%%%%%%%%%%%%%%%%%%%%%%%%%%%%%%%%
% Document related definitions. %{{{
%%%%%%%%%%%%%%%%%%%%%%%%%%%%%%%%%%%%%%%%%%%%%%%%%%%%%%%%%%%%%%%%%%%%%%%%%%%%%%%
% Title, author, date, acknowledgements goes here:
\title[Stability and ergodicity of adaptive scaling metropolis]{On the stability and
  ergodicity of adaptive scaling Metropolis algorithms}
\author{Matti Vihola}
\address{Matti Vihola, Department of Mathematics and Statistics,
  University of Jyväskylä,
  P.O.Box 35,
  FI-40014 University of Jyväskylä,
  Finland}
\email{matti.vihola@iki.fi}
\urladdr{http://iki.fi/mvihola/}
\thanks{The author was supported 
  by the Academy of Finland, projects no.~110599 and 201392,
  by the Finnish Academy of Science and Letters, Vilho,
  Yrjö, and Kalle Väisälä Foundation,
  by the Finnish Centre of Excellence in Analysis and
  Dynamics Research,
  and by the Finnish Graduate School in
  Stochastics and Statistics.}
\subjclass[2000]{Primary
  65C40; % Computational Markov chains
  Secondary
  60J27, % Markov chains with continuous parameter
  93E15, % Stochastic stability
  93E35% Stochastic learning and adaptive control
}
\keywords{
  Adaptive Markov chain Monte Carlo, 
  law of large numbers, Metropolis algorithm, stability, stochastic approximation.
  }
%}}}

%%%%%%%%%%%%%%%%%%%%%%%%%%%%%%%%%%%%%%%%%%%%%%%%%%%%%%%%%%%%%%%%%%%%%%%%%%%%%%%
\begin{abstract} %{{{
  The stability and ergodicity properties of two 
  adaptive random walk Metropolis algorithms are considered. 
  The both algorithms adjust the scaling of the
  proposal distribution continuously based on the observed acceptance
  probability. Unlike the previously proposed forms of the algorithms, the
  adapted scaling parameter is not constrained within a predefined compact
  interval. The first algorithm is based on scale adaptation only,
  while the second one incorporates also covariance adaptation.
  A strong law of large numbers is shown to hold 
  assuming that the target density is smooth enough and
  has either compact support or super-exponentially decaying tails.
\end{abstract} %}}}
\maketitle
\thispagestyle{empty}

%%%%%%%%%%%%%%%%%%%%%%%%%%%%%%%%%%%%%%%%%%%%%%%%%%%%%%%%%%%%%%%%%%%%%%%%%%%%%%%
\section{Introduction} 
\label{sec:intro} %{{{

Markov chain Monte Carlo (MCMC) is
a general method often used to approximate integrals of the type
\[
    I \defeq \int_{\R^d} f(x) \pi(x) \ud x < \infty
\]
where $\pi$ is a probability density function \cite[see,
e.g.,][]{robert-casella,gilks-mcmc,roberts-rosenthal-general}. 
The method is based on a 
Markov chain $(X_n)_{n\ge 1}$ that can be simulated in
practice, and for which the ergodic averages 
$I_n \defeq n^{-1}\sum_{k=1}^n f(X_k)$ converge to the integral 
$I$ as $n\to
\infty$. Such a chain can be constructed, for example, as follows.
Assume $q$ is a standard Gaussian probability density in $\R^d$, and
let $X_1 \equiv x_1$ for some fixed point $x_1 \in \R^d$.
For $n\ge 2$, recursively,
\begin{enumerate}[(S1)]
\item set $Y_n \defeq X_{n-1} + \theta \Sigma^{1/2} W_n$, where $W_n$ are independent random
      vectors distributed according to $q$, and
      \label{item:proposal-step}
\item with probability $\alpha_n \defeq \min\{1,\pi(Y_n)/\pi(X_{n-1})\}$ 
      the proposal is accepted and $X_n = Y_n$; otherwise 
      the proposal is rejected 
      and $X_n = X_{n-1}$.
      \setcounter{saveenum}{\value{enumi}}
\end{enumerate}
For any scale $\theta>0$ and symmetric positive definite (covariance) matrix
$\Sigma\in\R^{d\times d}$ 
this symmetric random walk Metropolis algorithm is valid:
$I_n\to I$ almost surely as $n\to\infty$ \cite[e.g.][Theorem 1]{nummelin-mcmcs}.
However, the efficiency of the method, that is, the speed at which
$I_n$ converges to $I$, is crucially affected by the choice of $\theta$ and
$\Sigma$. Suppose for a moment that the matrix $\Sigma$ is fixed, and we
only vary $\theta>0$. Then, for too large $\theta$, 
few proposals become accepted and the chain mixes poorly.
If $\theta$ is too small, most of the proposals $Y_n$ become accepted, but 
the steps $X_n-X_{n-1}$ are small, preventing good mixing.
In fact, previous results indicate that the acceptance probability is
closely related with the efficiency of the algorithm. Commonly used 
`rule of thumb' is that
the acceptance probability $\alpha_n$ should be on the
average about $0.234$
even though this choice is not always optimal
\cite{roberts-rosenthal-scaling,roberts-gelman-gilks-scaling,bedard-spa,sherlock-roberts}.
In practice, such a $\theta$ is usually found by several trial runs,
which can be laborious and time-consuming.

So called adaptive MCMC  algorithms have gained
popularity since the seminal work of Haario, Saksman, and Tamminen
\cite{saksman-am}.  Several other such algorithms have been proposed after
Andrieu and Robert \cite{andrieu-robert} noticed the connection
between Robbins-Monro stochastic approximation and adaptive MCMC
\cite{andrieu-moulines,atchade-rosenthal,roberts-rosenthal,roberts-rosenthal-examples,andrieu-thoms}.
The adaptive scaling Metropolis (ASM) algorithm 
optimises the scaling $\theta>0$ of the proposal
distribution adaptively, based on the observed acceptance probability.
Namely, in the step 
(S\ref{item:proposal-step}) of the above algorithm, the constant
$\theta$ is replaced, for example, with 
$\theta_{n-1}\defeq e^{S_{n-1}}$ where $(S_n)_{n\ge
  1}$ are random variables with $S_1\equiv s_1\in\R$ and
for $n\ge 2$ defined recursively as follows
\begin{enumerate}[(S1)]
    \setcounter{enumi}{\value{saveenum}}
    \item \label{item:adapt-theta} 
      $S_n = S_{n-1} + \eta_n (\alpha_n -
      \alpha^*)$
\end{enumerate}
where the parameter $\alpha^*$ determines 
the desired mean acceptance probability,
often $0.234$, and $(\eta_n)_{n\ge 2}$ is a sequence of
positive adaptation step sizes decaying to zero.

A similar random walk Metropolis algorithm with adaptive scaling was actually 
proposed over a decade
ago by Gilks, Roberts and Sahu \cite{gilks-roberts-sahu}.  Their approach
differed from the ASM approach so that the adaptation was 
performed only at particular regeneration times,
which may occur infrequently or
may be difficult to identify in practice.
The ASM algorithm presented above
has been proposed earlier by several authors 
\cite{andrieu-robert,atchade-rosenthal,%
roberts-rosenthal-examples}, with a slightly different update
formula (S\ref{item:adapt-theta}). The exact form of
(S\ref{item:adapt-theta}) was used by
\cite{atchade-fort,andrieu-thoms}. The crucial difference 
of the present paper compared to the earlier works is 
that the algorithm does not involve any
additional constraints on $\theta_n$.
This difference is chiefly a theoretical advance, as discussed below.
Therefore, no empirical studies of the performance of the algorithms are
included in the paper. 

Since the ASM algorithm only adapts the scale of the
proposal distribution, it is likely to be inefficient in certain situations.
For example, if $\pi$ is high-dimensional and possesses a strong correlation
structure and $\Sigma$ does not match this structure,
the ASM approach is likely to be suboptimal. In such a situation,
one can employ the Adaptive Metropolis (AM) algorithm \cite{saksman-am}
to adapt the covariance shape with the scaling adaptation
\cite{atchade-fort,andrieu-thoms}. That is, in addition to using random
$\theta_{n-1}$ in (S\ref{item:proposal-step}), one uses also a random
matrix $\Sigma_{n-1}$ instead of a fixed $\Sigma$. Namely, $\Sigma_n$ is 
a covariance estimator based on $X_1,\ldots,X_n$; the details can be found in
Section \ref{sec:main}. This algorithm will be referred here 
to as the adaptive scaling within AM (ASWAM).

It is not obvious that adaptive algorithms like the ASM and the ASWAM are
valid, that is, $I_n \to I$. In fact, there are examples of continuously
adapting MCMC schemes that destroy the correct 
ergodic\footnote{In the present work, the word `ergodicity' refers 
  to the convergence of
  ergodic averages $I_n$ to $I$, unlike 
  Roberts and Rosenthal \cite{roberts-rosenthal} who define
  `ergodic' by the convergence of the marginal distributions of $X_n$ to $\pi$ in the
  total variation sense.}
properties 
\cite{roberts-rosenthal}.  Current ergodicity results on adaptive MCMC
algorithms ensuring that $I_n\to I$ assume some `uniform' behaviour for all
the possible MCMC kernels
\cite{atchade-rosenthal,roberts-rosenthal,atchade-fort}. In the context of
the adaptive scaling framework, this essentially means that $\theta_n$ must
be constrained to a predefined set $[a,b]$ with some $0<a\le b<\infty$.
Alternatively, one can use a general reprojection technique with a sequence
of such sets $[a_n,b_n]$ with $a_n\searrow 0$ and $b_n\nearrow \infty$ as
proposed by Andrieu and Moulines \cite{andrieu-moulines}, or stabilisation
methods that modify the adaptation rule to ensure stable behaviour
\cite{andrieu-thoms}. Such constraints and stabilisation structures are
theoretically convenient, but may pose a problem for a practitioner. Good
values for the constraint parameters may be difficult to choose without
prior knowledge of the target distribution $\pi$. In the worst case, the
values are chosen inappropriately and the algorithm is rendered useless in
practice.

It is a common belief that many of the proposed adaptive MCMC
algorithms are inherently stable and thereby do not require additional
constraints or stabilisation structures. Indeed, there is considerable
empirical evidence of the stability of several unconstrained algorithms,
including the adaptive scaling approach. There are yet only few theoretical
results, especially Saksman and Vihola \cite{saksman-vihola} verifying the
correct ergodic properties and the stability of the AM
algorithm \cite{saksman-am}, provided the target distribution $\pi$ has
super-exponentially decaying tails with regular contours. These assumptions
on $\pi$ are close to those that ensure the geometric ergodicity of a
non-adaptive random walk Metropolis algorithm \cite{jarner-hansen}. The
result in \cite{saksman-vihola} does not assume an upper bound, but 
requires an explicit lower bound for the adapted covariance 
parameter.\footnote{The recent work \cite{vihola-collapse} gives partial stability results of the AM
also without the lower bound.}
In the context of the scaling adaptation, the
lower bound is analogous to constraining $\theta_n$ to the interval
$[a,\infty)$, where $a>0$.

The main results of this paper, 
formulated in the next section, show 
that the stability and ergodicity of the ASM algorithm can be verified 
under similar assumptions on the target distribution as in
\cite{saksman-vihola}, without any modifications or
constraints on the adaptation parameter $\theta_n\in(0,\infty)$.
These are the first results that validate the correctness of a 
completely unconstrained, fully adaptive MCMC algorithm.
Similar result applies for the ASWAM approach, given that stability is
enforced on the covariance parameter $\Sigma_n$ by bounding the
eigenvalues away from zero and infinity.

%}}}

%%%%%%%%%%%%%%%%%%%%%%%%%%%%%%%%%%%%%%%%%%%%%%%%%%%%%%%%%%%%%%%%%%%%%%%%%%%%%%%
\section{Main results} 
\label{sec:main} %{{{

The scaling adaptation introduced in Section \ref{sec:intro} 
can be generalised by considering a function $\phi$ 
mapping real-valued parameter values $S_n$ to a scaling in $(0,\infty)$.
%%%%%%%%%%%%%%%%%%%%
\begin{assumption} 
    \label{a:scaling-function} %{{{
    The scaling function $\phi:\R\to(0,\infty)$ is increasing and surjective,
    piecewise differentiable and 
    there are constants $h,c>0$ and $\kappa\ge 1$ such that
    \begin{equation*}
        \phi'(s+\bar{h}) \le c \max\{1,\phi^{\kappa}(s)\}
    \end{equation*}
    for all $s\in\R$ and all $0\le \bar{h} \le h$. 
\end{assumption} %}}}
The function $\phi(s) = e^s$ was suggested above, but
Assumption \ref{a:scaling-function} allows one to use also, for example,
piecewise polynomially defined $\phi$. For example, defining 
$\phi(x)=x$ whenever $x$ is greater than some $x_0>0$ and 
continuing $\phi$ appropriately for $x<x_0$
gives an algorithm in the spirit of
Atchadé and Rosenthal \cite{atchade-rosenthal}.

The results hold also for other than a Gaussian proposal, as long as the
proposal density is spherically symmetric and satisfies a certain tail
behaviour.
%%%%%%%%%%%%%%%%%%%%
\begin{assumption} 
    \label{a:proposal} %{{{
The proposal density $q$ can be written as 
$q(z) = \hat{q}(\| z\|)$ where 
$\hat{q}:[0,\infty)\to(0,\infty)$ is a bounded, decreasing and 
differentiable function. Moreover, for any $\xi\in(0,1)$ 
there exist an $\epsilon^*>0$, constants $0\le a < b < \infty$ and
$c_1,c_2,c_3>0$ 
such that for all $\epsilon\in[0,\epsilon^*]$, the following bounds hold
for the derivative of $\hat{q}$
    \begin{eqnarray*}
        \xi\hat{q}'(x) - \hat{q}'(x+\epsilon) &\ge& c_1,\qquad\text{for all}\qquad
        a\le x\le b, \\
        \int_0^\infty \min\{0,\xi\hat{q}'(x) - \hat{q}'(x+\epsilon)\} \ud x 
        &\ge& - c_2e^{-c_3 \epsilon^{-1}}.
    \end{eqnarray*}
\end{assumption} %}}}
Proposition \ref{prop:proposal-examples} in Appendix \ref{sec:metropolis} shows that
Assumption \ref{a:proposal} holds for Gaussian and Student distributions
$q$.

We also need certain conditions for the adaptation step size sequence
$(\eta_n)_{n\ge 2}$.
%%%%%%%%%%%%%%%%%%%%
\begin{assumption}
    \label{a:step-size} %{{{
    The sequence $(\eta_n)_{n\ge 2}$ is non-negative,
    $\sum_{n=2}^\infty \eta_n = \infty$ and
    $\sum_{n=2}^\infty \eta_n^2 < \infty$.
\end{assumption} %}}}
Assumption \ref{a:step-size} is classical in the context of 
stochastic approximation.
A typical choice for the step size sequence satisfying Assumption
\ref{a:step-size} is 
$\eta_n \propto n^{-\gamma}$ with some constant $\gamma\in(1/2,1]$.

We are now ready to define the adaptive scaling Metropolis (ASM) and the
adaptive scaling within adaptive Metropolis (ASWAM) algorithms.
%%%%%%%%%%%%%%%%%%%%
\begin{definition}[ASM]
    \label{def:asm} %{{{
Suppose that the matrix
$\Sigma\in\R^{d\times d}$ is symmetric and positive definite,
$\phi$ satisfies Assumption \ref{a:scaling-function}, 
$q$ satisfies Assumption \ref{a:proposal} and
$(\eta_n)_{n\ge 2}$ satisfies Assumption \ref{a:step-size}.
Let $W_n\sim q$ be independent and let 
$U_n$ be independent and uniformly distributed in the unit interval 
$[0,1]$. Let $X_1\equiv x_1\in\R^d$ with $\pi(x_1)>0$ and 
$S_1\equiv s_1\in\R$, and for
$n\ge 2$ define recursively
\begin{eqnarray}
    Y_n &=& X_{n-1} + \phi(S_{n-1}) \Sigma^{1/2} W_n 
    \label{eq:asm-proposal} \\
    X_n &=& \begin{cases}
      Y_n,&\text{if $U_n\le \alpha_n$} \\
      X_{n-1},&\text{otherwise}
    \end{cases} \\
    S_n &=& S_{n-1} + \eta_n (\alpha_n - \alpha^*).
    \label{eq:asm-s-adapt}
\end{eqnarray}
\end{definition} %}}}

%%%%%%%%%%%%%%%%%%%%
\begin{definition}[ASWAM]
    \label{def:aswam} %{{{
    Assume the setting of the ASM algorithm in \ref{def:asm}, but 
    instead
    of \eqref{eq:asm-proposal} use
    \begin{eqnarray}
        Y_n &=& X_{n-1} + \phi(S_{n-1}) \Sigma_{n-1}^{1/2} W_n.
    \end{eqnarray}
    The covariance process $(\Sigma_n)_{n\ge 1}$ is determined as follows:
    let $\mu_1\equiv x_1\in\R^d$, suppose
    $\Sigma_1 \in\R^{d\times d}$ is a symmetric and positive definite
    matrix and 
    \begin{eqnarray}
        \hat{\mu}_n &=& (1-\eta_n) \mu_{n-1} + \eta_n X_n 
        \label{eq:aswam-mu} \\
        \hat{\Sigma}_n &=& (1-\eta_n) \Sigma_{n-1}
        + \eta_n (X_n - \mu_{n-1})(X_n - \mu_{n-1})^T \\
        (\mu_n,\Sigma_n) &=& \begin{cases}
          (\hat{\mu},\hat{\Sigma}_n),&\text{if 
            $(\hat{\mu},\hat{\Sigma}_n)\in \spc{S}_\zeta$ and } \\
          (\mu_{n-1},\Sigma_{n-1}),&\text{otherwise,}
        \end{cases}
        \label{eq:aswam-trunc}
    \end{eqnarray}
    where the truncation set is defined as
    $\spc{S}_{\zeta} = \big\{(\mu,\Sigma): \|\mu\|\le \zeta,\,
      \lambda(\Sigma)\subset[\zeta^{-1},\zeta]\big\}$ with
    $\lambda(\Sigma)$ being the set
    of the eigenvalues of $\Sigma$ and $\zeta\in[1,\infty)$ is a constant
    parameter.
\end{definition} %}}}
The step \eqref{eq:aswam-trunc} enforces the stability of the covariance
adaptation process, while the scaling parameter $S_n$ follows 
\eqref{eq:asm-s-adapt}.

Before stating the first ergodicity result, consider the
following condition on the regularity of a collection of sets.
Before that, recall that a $C^1$ domain in $\R^d$ is
a domain whose boundary is locally a graph of a continuously differentiable 
function.
%%%%%%%%%%%%%%%%%%%%
\begin{definition} 
    \label{def:uniformly-continuous-normals} %{{{
    Suppose that $\{A_i\}_{i\in I}$ is a collection of sets
    $A_i\subset\R^d$ each consisting of finitely many 
    disjoint components that are closures of $C^1$ domains.
    Let $n_i(x)$ stand for the outer-pointing normal 
    at $x$ in the boundary $\partial A_i$.
    Then, $\{A_i\}_{i\in I}$ 
    have \emph{uniformly continuous normals} if
    for all $\epsilon>0$ there is a $\delta>0$ such that
    for any $i\in I$ 
    it holds that $\|n_i(x)-n_i(y)\| \le \epsilon$
    for all $x,y\in \partial A_i$ such that 
    $\|x-y\|\le \delta$.
\end{definition} %}}}
Definition \ref{def:uniformly-continuous-normals} 
essentially states that the boundaries $\partial
A_i$ must be regular enough to ensure that 
if one looks at any $\partial A_i$ at a sufficiently small scale, it will look
locally almost like a plane.

%%%%%%%%%%%%%%%%%%%%
\begin{theorem}
    \label{thm:compact-stable} %{{{
    Assume $\pi$ has a compact support $\spc{X}\subset\R^d$ and 
    $\pi$ is continuous, bounded and bounded away from zero on $\spc{X}$. 
    Moreover, assume
    that $\spc{X}$ has a uniformly continuous normal
    (Definition \ref{def:uniformly-continuous-normals}) and 
    $\alpha^*\in\big(0,\frac{1}{2}\big)$.
    Then, for either the ASM or the ASWAM process and for 
    any bounded function $f$, the strong
    law of large numbers holds that is,
    \begin{equation}
        \frac{1}{n}\sum_{k=1}^n f(X_k) \xrightarrow{n\to\infty} \int_{\R^d} f(x)
        \pi(x) \ud x\qquad\text{almost surely.}
        \label{eq:slln}
    \end{equation}
\end{theorem}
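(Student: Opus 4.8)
The plan is to combine a stochastic approximation stability argument for the adaptation parameter with the general ergodicity theory for adaptive MCMC, exploiting two features special to a compactly supported target: a drift of the mean acceptance probability towards a fixed compact set of scales that is \emph{uniform in the state}, and a minorisation of the family of Metropolis kernels that is \emph{uniform in the scale} over compacts. Note first that since $\pi$ is supported on $\spc{X}$, the chain $(X_n)$ never leaves $\spc{X}$.

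The key lemma is a pair of drift estimates. Write $\bar\alpha(x,\theta)\defeq\int_{\R^d}\min\{1,\pi(x+\theta w)/\pi(x)\}\,q(w)\,\ud w$ for the mean acceptance probability from $x\in\spc{X}$ at scale $\theta$, so $\Econd{\alpha_n}{\F_{n-1}}=\bar\alpha(X_{n-1},\theta_{n-1})$. I claim there are $0<\theta_-<\theta_+<\infty$ and $\delta>0$ with $\sup_{x\in\spc{X}}\bar\alpha(x,\theta)\le\alpha^*-\delta$ for $\theta\ge\theta_+$ and $\inf_{x\in\spc{X}}\bar\alpha(x,\theta)\ge\alpha^*+\delta$ for $\theta\le\theta_-$. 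The upper estimate is easy: since $\pi$ is bounded, bounded away from $0$ on $\spc{X}$, and vanishes off $\spc{X}$, $\bar\alpha(x,\theta)\le(\sup\pi/\inf_{\spc{X}}\pi)\,\P(x+\theta W\in\spc{X})\le C\theta^{-d}\to0$. The lower estimate is the crux. For $x$ at a fixed positive distance from $\partial\spc{X}$ the integrand tends to $1$ as $\theta\to0$; for $x$ close to $\partial\spc{X}$, the uniformly continuous normal condition (Definition \ref{def:uniformly-continuous-normals}) forces $\spc{X}$, at the scale of a shrinking neighbourhood of $x$, to resemble the half-space cut off by the tangent plane at the nearest boundary point, uniformly over such $x$; hence $\P(x+\theta W\in\spc{X})\to\tfrac12$ as $\theta\to0$ uniformly in $x\in\spc{X}$, while on the event that this small proposal lands in $\spc{X}$ one has $\pi(x+\theta W)/\pi(x)\to1$ uniformly, by uniform continuity of $\pi$ on the compact $\spc{X}$ and $\inf_{\spc{X}}\pi>0$. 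So $\liminf_{\theta\to0}\inf_{x\in\spc{X}}\bar\alpha(x,\theta)\ge\tfrac12$, and since $\alpha^*<\tfrac12$ the claim follows.

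Next, stability. Put $\zeta_n\defeq\log\theta_n$, so $\zeta_n=\zeta_{n-1}+\eta_n(\alpha_n-\alpha^*)$ with $\eta_n=cn^{-\gamma}$, $\gamma\in(1/2,1]$. Decompose $\alpha_n-\alpha^*=\bigl(\bar\alpha(X_{n-1},\theta_{n-1})-\alpha^*\bigr)+D_n$, where $D_n\defeq\alpha_n-\bar\alpha(X_{n-1},\theta_{n-1})$ is a martingale difference bounded by $1$; since $\sum_k\eta_k^2<\infty$, the series $\sum_k\eta_k D_k$ converges almost surely. By the key lemma the remaining term pushes $\zeta_n$ towards $[\log\theta_-,\log\theta_+]$ at rate at least $\delta$ whenever $\zeta_{n-1}$ lies outside that interval, uniformly in $X_{n-1}\in\spc{X}$; together with $\sum_k\eta_k=\infty$, the convergence of the noise series, and the vanishing increments $|\zeta_n-\zeta_{n-1}|\le\eta_n$, a standard excursion argument (cf.\ \cite{saksman-vihola,andrieu-moulines}) yields $\sup_n|\zeta_n|<\infty$ almost surely, i.e.\ $\theta_n\in[\underline\theta,\overline\theta]$ for all $n$ with random $0<\underline\theta\le\overline\theta<\infty$.

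Finally, ergodicity. Fix a compact $K=[a,b]\subset(0,\infty)$ and consider the process obtained by freezing the adaptation as soon as $\theta_n$ would leave $K$. For this $K$-confined process: diminishing adaptation holds because $|\log\theta_n-\log\theta_{n-1}|\le\eta_n\to0$ and $\theta\mapsto P_\theta$ is Lipschitz in total variation uniformly over $\theta\in K$ (the acceptance ratio is independent of $\theta$, and $\theta\mapsto q_\theta$ is locally Lipschitz in $L^1$); and simultaneous uniform ergodicity holds because every Metropolis kernel with $\theta\in K$ satisfies $P_\theta(x,A)\ge\varepsilon_K\,\mathrm{Leb}(A\cap\spc{X})$ for all $x\in\spc{X}$, with $\varepsilon_K\defeq(\inf_{\spc{X}}\pi/\sup_{\spc{X}}\pi)\,\inf_{\theta\in K}\inf_{x,y\in\spc{X}}q_\theta(y-x)>0$, so $\spc{X}$ is a uniform small set and the $P_\theta$, $\theta\in K$, are uniformly ergodic towards $\pi$. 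The general adaptive MCMC law of large numbers for bounded functionals (e.g.\ \cite{roberts-rosenthal,atchade-fort}) then gives \eqref{eq:slln} for the $K$-confined process. By the stability just proved, $\P(\theta_n\in K\text{ for all }n)\to1$ as $K\uparrow(0,\infty)$, and on that event the confined process coincides with the original one, so letting $K$ grow transfers \eqref{eq:slln} to $(X_n)$. The main obstacle is the uniform near-boundary bound $\inf_{x\in\spc{X}}\bar\alpha(x,\theta)\to\tfrac12$ as $\theta\to0$: this is precisely where the hypotheses $\alpha^*<1/2$ and the uniformly continuous normal condition enter; the stability excursion argument and the localisation over $K$ are technical but follow well-established patterns.
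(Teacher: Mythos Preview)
Your proposal is correct and matches the paper's route: the drift estimates are Propositions \ref{prop:upper} and \ref{prop:lower} (with Lemma \ref{lemma:half-space} supplying the half-space approximation near $\partial\spc{X}$), the excursion argument is Theorem \ref{th:stability}, and the localise-to-$K$-then-let-$K$-grow step is exactly the truncated-process construction in Theorem \ref{thm:compact-stable-gen}. One caveat on the final black box: \cite{roberts-rosenthal} by itself gives only a \emph{weak} law, so for \eqref{eq:slln} you must lean on a strong-law result such as the mixingale argument of \cite{atchade-rosenthal} or the paper's Theorem \ref{thm:slln-restricted}; the remark following Theorem \ref{thm:compact-stable-gen} explicitly endorses these alternatives in the compact-support setting.
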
 %}}}
The proof of Theorem \ref{thm:compact-stable} is given in Section
\ref{sec:ergodicity}.

Let us consider next target distributions $\pi$ with unbounded supports,
satisfying the following conditions formulated in \cite{saksman-vihola}.
%%%%%%%%%%%%%%%%%%%%
\begin{assumption} 
    \label{a:super-exp} %{{{
    The density $\pi$ is bounded, bounded away from zero 
    on compact sets, differentiable, and 
\begin{equation}
    \lim_{r\to\infty} \sup_{\norm{x}\ge r}
    \frac{x}{\norm{x}^\rho} \cdot \nabla \log \pi(x) = -\infty
    \label{eq:super-exp}
\end{equation}
for some constant $\rho>1$, where $\|\uarg\|$ stands for the Euclidean norm. 
Moreover, the contour normals satisfy
\begin{equation}
    \lim_{r\to\infty} \sup_{\norm{x}\ge r} 
    \frac{x}{\norm{x}} \cdot \frac{\nabla\pi(x)}{\norm{\nabla\pi(x)}} < 0.
    \label{eq:reg-contour}
\end{equation} 
\end{assumption} %}}}
This assumption is very near to the conditions
introduced by Jarner and Hansen \cite{jarner-hansen} to ensure the geometric
ergodicity of a (non-adaptive) Metropolis algorithm, and
considered by Andrieu and Moulines \cite{andrieu-moulines} in the context of
adaptive MCMC.
In particular, 
\cite{jarner-hansen,andrieu-moulines} assume that $\pi$ fulfils the contour regularity
condition \eqref{eq:reg-contour}. Instead of \eqref{eq:super-exp}, they assume 
a super-exponential decay on $\pi$,
\begin{equation*}
    \lim_{r\to\infty} \sup_{\norm{x}\ge r}
    \frac{x}{\norm{x}} \cdot \nabla \log \pi(x) = -\infty
\end{equation*}
which is only slightly more general than \eqref{eq:super-exp} allowing $\rho=1$.
See \cite{jarner-hansen} for examples and discussion on these conditions.

%%%%%%%%%%%%%%%%%%%%
\begin{theorem} 
    \label{thm:super-exp-ergodicity} %{{{
    Suppose $\alpha^*\in\big(0,\frac{1}{2}\big)$, 
    $\pi$ 
    fulfils Assumption 
    \ref{a:super-exp} and there is a $t_0>0$ such that
    the collection of contour sets $\{x\in\R^d:\pi(x)\ge t\}_{0<t\le t_0}$
    have uniformly continuous normals
    (Definition \ref{def:uniformly-continuous-normals}).
    Assume that there exist constants $c<\infty$ and $p\in(0,1)$ such that 
    $|f(x)| \le c\pi^{-p}(x)$ for all $x\in\R^d$. Then, for the ASM and the
    ASWAM processes, the strong law of
    large numbers \eqref{eq:slln} holds.
\end{theorem} %}}}
The proof of Theorem \ref{thm:super-exp-ergodicity} is given in Section
\ref{sec:ergodicity}.

%%%%%%%%%%%%%%%%%%%%
\begin{remark} %{{{
For many practical target densities satisfying Assumption \ref{a:super-exp}
the tail contours are (essentially) scaled copies of each other, in which
case they have automatically uniformly continuous normals.
This indicates that the conditions of Theorem 
\ref{thm:super-exp-ergodicity} are practically similar to
\citep[Theorem 10]{saksman-vihola} verifying the ergodicity
of the Adaptive Metropolis algorithm.
\end{remark} %}}}

%%%%%%%%%%%%%%%%%%%%
\begin{remark} %{{{
The `safe' values for the desired acceptance rate stipulated by Theorems
\ref{thm:compact-stable} and \ref{thm:super-exp-ergodicity} are
$\alpha^*\in(0,1/2)$. The values $[1/2, 1)$ are excluded due to technical
reasons, in particular due to Proposition \ref{prop:lower} establishing the
lower bound for $\phi(S_n)$. It is
expected that Theorems \ref{thm:compact-stable} and
\ref{thm:super-exp-ergodicity} hold assuming only $\alpha^*\in(0,1)$, but
this cannot be verified with the present approach. The range
$\alpha^*\in(0,1/2)$ is, however, often sufficient in practice, as the 
most commonly used values for a random walk Metropolis algorithms are probably $\alpha^* =
0.234$ and $\alpha^* = 0.44$, and it has been suggested that values
$\alpha^*\in[0.1, 0.4]$ should work well in most cases
\cite{roberts-rosenthal-examples,roberts-rosenthal-scaling,roberts-gelman-gilks-scaling,bedard-spa}.
\end{remark} %}}}

%%%%%%%%%%%%%%%%%%%%
\begin{remark} %{{{
The conditions on the proposal density in Assumption \ref{a:proposal} are
not optimal. The technical tail decay condition on $\hat{q}$ 
is needed in the case of $\pi$ with an unbounded support in
Theorem \ref{thm:super-exp-ergodicity}.
Theorem \ref{thm:compact-stable} considering compactly supported $\pi$ 
can be established for a more general class of proposal 
distributions, but this is not pursued here.
\end{remark} %}}}

%%%%%%%%%%%%%%%%%%%%
\begin{remark} %{{{
Theorems \ref{thm:compact-stable} and \ref{thm:super-exp-ergodicity} ensure
that the trajectories of the ergodic averages converge almost surely but do
not state explicit results on the convergence of the marginal distributions 
of $X_n$. The marginal convergence could be established by
using the technique in
the proof of Proposition 6 of \cite{andrieu-moulines}, modifying it
in the lines of \cite{saksman-vihola}.
\end{remark} %}}}

The rest of the article is organised as follows. Section \ref{sec:notation}
describes a general framework for scale adaptation covering simultaneously
both the ASM and the ASWAM algorithms. Section
\ref{sec:stability} develops stability results for this process.
In particular, Corollary \ref{cor:compact-stability} ensures the stability of
the sequence $\phi(S_n)$ with the assumptions of Theorem
\ref{thm:compact-stable}, and
Proposition \ref{prop:upper-polynomial} controls the growth
of $\phi(S_n)$ when $\pi$ fulfils the conditions of Theorem \ref{thm:super-exp-ergodicity}.
Once the stability results are obtained, Theorems
\ref{thm:compact-stable} and \ref{thm:super-exp-ergodicity} are proved in
Section \ref{sec:ergodicity} using the results in \cite{saksman-vihola}.

%}}}

%%%%%%%%%%%%%%%%%%%%%%%%%%%%%%%%%%%%%%%%%%%%%%%%%%%%%%%%%%%%%%%%%%%%%%%%%%%%%%%
\section{Framework and notation} 
\label{sec:notation} %{{{

Consider a process $(X_n,\Gamma_n)_{n\ge 1}$ evolving in
the measurable space $\spc{X}\times\spc{G}$,
where the support of the target density 
$\spc{X}\defeq\{x\in\R^d:\pi(x)>0\}$
is the space of the `MCMC' chain $(X_n)_{n\ge 1}$, and
the adaptation parameters $(\Gamma_n)_{n\ge 1}=(S_n,\mu_n,\Sigma_n)_{n\ge 1}$ 
evolve in $\spc{G}=\R\times\spc{S}_\zeta$; the scaling parameters
$(S_n)_{n\ge 1}$ are real-valued and the covariance adaptation process
$(\mu_n,\Sigma_n)_{n\ge 1}$ takes values on the space
$\spc{S}_\zeta\subset\R^d\times\mathcal{C}_\zeta$ with
\begin{equation*}
    \mathcal{C}_\zeta\defeq \big\{\Sigma\in\R^{d\times d}: \text{$\Sigma$
is symmetric and $\lambda(\Sigma)\subset[\zeta^{-1},\zeta]$}\big\} 
\end{equation*}
and where
$\lambda(\Sigma)$ stands for the set of eigenvalues of $\Sigma$. 
By this definition, we may define $\spc{S}_\zeta=\{(\mu,\Sigma)\}$
in the case of the ASM whence
$\Sigma_n = \Sigma$ and $\mu_n = \mu$ for all $n\ge 1$
and for the ASWAM, $(\mu_n,\Sigma_n)$ is determined through
\eqref{eq:aswam-mu}--\eqref{eq:aswam-trunc}.
We need the specific form of adaptation of
$(\mu_n,\Sigma_n)$ only in Section \ref{sec:ergodicity}.
For the stability results in Section \ref{sec:stability}
it is sufficient that $\Sigma_n\in\mathcal{C}_\zeta$.

Denote $\F_n \defeq \sigma(W_n,U_n:1\le k\le n)$ so that 
$(\F_n)_{n\ge 1}$ is a filtration and also each $\Gamma_n$ is 
$\F_n$-adapted. With these definitions, we may write
\begin{eqnarray}
    Y_{n+1}\mid \F_n & \sim & q_{\Gamma_n}(X_n, \uarg) 
    \label{eq:prop-rec} \\
    X_{n+1} & = & Y_{n+1} \charfun{\{U_{n+1} \le \alpha_{n+1}\}}
                + X_n \charfun{\{U_{n+1} > \alpha_{n+1}\}}
     \label{eq:x-rec} \\
    S_{n+1} & = & S_n + \eta_{n+1}H(X_n, Y_{n+1})
    \label{eq:s-rec}
\end{eqnarray}
where $\charfun{A}$ stands for the indicator function of a set $A$ and
$H(x,y) \defeq \alpha(x,y) - \alpha^*$ with $\alpha(x,y) \defeq
\min\big\{1,\frac{\pi(y)}{\pi(x)}\big\}$.
Moreover, for $\gamma = (s,\mu,\Sigma)\in\spc{G}$ 
the proposal density is defined as
\begin{equation}
    q_\gamma(z) = 
    q_{(s,\Sigma)}(z) = 
    [\phi(s)]^{-d}\det(\Sigma)^{-1/2}q([\phi(s)]^{-1}\Sigma^{-1/2}z).
    \label{eq:proposal-gamma}
\end{equation}
Note that the form \eqref{eq:s-rec} of adaptation can be 
considered as Robbins-Monro stochastic approximation; see
\cite{andrieu-robert,andrieu-moulines,sa-verifiable} and references
therein.

We will need the notion of 
expected acceptance rate at $x\in\spc{X}$ with parameter
$\gamma\in\spc{G}$ as
\[
    \acc(x,\gamma)
    \defeq \int_{\spc{X}} \alpha(x,y)  q_\gamma(x-y)
    \ud y.
\]
On average, the adaptation rule decreases $S_n$ whenever
$\acc(X_n,\Gamma_n)<\alpha^*$, and vice versa. So, it is
plausible to expect that the algorithm would eventually 
result in $\Gamma_n\to \gamma^*\in\spc{G}$ such that
the overall expected acceptance rate 
$\int_{\spc{X}} \acc(x,\gamma) \pi(x) \ud x = \alpha^*$.
In this paper, however, the convergence of $\Gamma_n$ is not the main concern,
but the stability of it, as it turns out to be crucial for the validity 
of the algorithms considered.

The Metropolis transition kernel with a proposal density $q_\gamma$ is given
as
\begin{equation}
    P_\gamma(x,A) \defeq \charfun{A}(x)\int_{\R^d} [1-\alpha(x,y)]
    q_\gamma(x-y)\ud y
    + \int_A \alpha(x,y) q_\gamma(x-y) \ud y.
    \label{eq:metropolis-kernel}
\end{equation}
Using the kernels $P_\gamma$, one can write \eqref{eq:prop-rec} and 
\eqref{eq:x-rec} as 
$\P(X_{n+1}\in A\mid \F_n) = P_{\Gamma_n}(X_n,A)$.
As usual, integration of a function $f$ with respect to a transition 
kernel is denoted as
\[
    P_\gamma f(x) \defeq \int_{\spc{X}} f(y) P_\gamma(x,\ud y).
\]
Let $V\ge 1$ be a function. The $V$-norm of a function $f$ is defined as
\[
    \| f \|_V \defeq \sup_x \frac{|f(x)|}{V(x)}.
\]
The closed ball in $\R^d$ is written as
$\ball(x,r) \defeq \{y\in\R^d:\|x-y\|\le r\}$, and the distance of a point
$x\in\R^d$ from the set $A\subset\R^d$ is denoted as
$d(x,A)\defeq \inf\{\|x-y\|:y\in A\}$.

%}}}

%%%%%%%%%%%%%%%%%%%%%%%%%%%%%%%%%%%%%%%%%%%%%%%%%%%%%%%%%%%%%%%%%%%%%%%%%%%%%%%
\section{Stability} 
\label{sec:stability} %{{{

This section develops stability results for the general adaptive scaling 
process of Section \ref{sec:notation}. We start with a general stability
theorem based on a martingale argument. This theorem is auxiliary for the
present paper, but may have applications also in other settings.

%%%%%%%%%%%%%%%%%%%%
\begin{theorem} 
    \label{th:stability} %{{{
          Suppose $(\F_n)_{n\ge 1}$ is a filtration and 
          $H_n$ are $\F_n$-adapted random variables satisfying
          $\limsup_{n\to\infty} \eta_{n} H_{n} \le 0$ and
          \begin{equation}
    \sum_{n=2}^\infty \eta_{n}^2 \big(\Econd{H_{n}^2}{\F_{n-1}} 
    -\Econd{H_{n}}{\F_{n-1}}^2\big)
    < \infty.
    \label{eq:stability-martingale-squares}
    \end{equation}
          Let $S_1\equiv s_1\in\R$, and define
          $
              S_{n+1} \defeq S_n + \eta_{n+1} H_{n+1}$ 
              recursively for all
                $n\ge 1$,
          where $\eta_n$ are non-negative constants such that 
          $\sum \eta_n^2 < \infty$.
\begin{enumerate}[(i)]
    \item
      \label{item:adapt-stab1}
If there is a constant $a<\infty$ such that for all $n\ge 1$
\begin{equation*}
    \Econd{ H_{n+1} \charfun{\{S_n\ge a\}}}{\F_n} \le 0,
\end{equation*}
then $\limsup_{n\to\infty} S_n < \infty$ a.s.
\item
  \label{item:adapt-stab2}
  If also $\sum \eta_n =\infty$ and there is a non-decreasing
sequence of $\F_n$-adapted random variables $(A_n)_{n\ge 1}\subset\R$ 
and a constant $b<0$ such that for all $n\ge 1$
\begin{equation*}
    \Econd{ H_{n+1} \charfun{\{S_n\ge A_n\}}}{\F_n} \le 
    b\charfun{\{S_n\ge A_n\}},
\end{equation*}
then $\limsup_{n\to\infty} (S_n - A_n) \le 0$ a.s.
\end{enumerate}
\end{theorem} %}}}
\begin{proof} %{{{
Let $W_{n} \defeq H_n\charfun{\{S_{n-1}\ge a\}}$ for
$n\ge 2$, and define
the martingale $(M_n,\F_n)_{n\ge 1}$ by setting $M_1\defeq 0$, and 
$M_n \defeq \sum_{k=2}^n \ud M_k$ for $n\ge 2$ with the differences $\ud
M_n \defeq \eta_n (W_n - \Econd{W_n}{\F_{n-1}})$. Now,
\[
    \sum_{k=2}^\infty \Econd{\ud M_k^2}{\F_{k-1}} 
    = \sum_{k=2}^\infty \eta_k^2 \big(\Econd{H_{n}^2}{\F_{n-1}} 
    -\Econd{H_{n}}{\F_{n-1}}^2\big)\charfun{\{S_{n-1}\ge a\}}
    < \infty
\]
by assumption. This implies that almost every path
of $M_n$ converges to a finite limit $M_\infty$ \cite[e.g.][Theorem
 2.15]{hall-heyde}.

Let $(\tau_k)_{k\ge 1}$ be the exit times of $S_n$ from $(-\infty,a)$,
defined as $\tau_k \defeq \inf \{n> \tau_{k-1}: S_n\ge a,\,S_{n-1}<a\}$ using the
conventions $\tau_0=0$, $S_0<a$, and $\inf\emptyset = \infty$.
Define also the latest exit from $(-\infty,a)$ until time $n$ by
$\sigma_n \defeq \sup \{\tau_k:k\ge 1,\,\tau_k\le n\}$.
Whenever $S_n\ge a$,
one can write $S_n = S_{\sigma_n} + (M_n -
M_{\sigma_n}) + Z_{\sigma_n,n}$ where 
\[
    \begin{split}
    Z_{m,n} &\defeq 
    \sum_{k={m+1}}^n
    \eta_k \Econd{W_k}{\F_{k-1}} 
    \le 0
    \end{split}
\]
by assumption. In this case,
\begin{equation}
    \begin{split}
    S_n &\le S_{\sigma_n} + (M_n -M_{\sigma_n})  
    \le \max\{S_1, a\} + \eta_{\sigma_n} H_{\sigma_n} + |M_n| +
    |M_{\sigma_n}| \\
    & \le \max\{S_1, a\} + \sup_{k\ge 1} \eta_k H_k
      +2\sup_{k\ge 1} |M_k| \le C
    \end{split}
\label{eq:s-upper-bound}
\end{equation}
where $C$ is a.s.~finite. If $S_n<a$ the claim is trivial
and (\ref{item:adapt-stab1}) holds.

Assume then (\ref{item:adapt-stab2}). If $S_n<A_n$ for all $n$ 
greater than some $N_1(\omega)<\infty$, 
the claim is trivial. Suppose then that $S_n\ge A_n$ infinitely often.
Define $(\tau_k)_{k\ge 1}$ as the exit times of $S_n$ from $(-\infty,A_n)$
as above, $\tau_k \defeq \inf\{n>\tau_{k-1}: S_n\ge A_n,\, S_{n-1}<A_{n-1}\}$
with $\tau_0\equiv 0$ and $S_0<A_0$.
The times $\tau_k$ must be a.s.~finite in this case (and $S_n$ returns to
$(-\infty,A_n)$ infinitely
often), for suppose the contrary: then the last exit times $\sigma_n$
are bounded by some
$\sigma_n\le \sigma<\infty$, and for $n \ge
\sigma$ one may write
\[
    S_n = S_{\sigma} + (M_n - M_{\sigma}) + Z_{\sigma,n}
    \le C_{\sigma} + Z_{\sigma,n}
\]
where $M_n$ and $Z_{n,m}$ are defined as above, but using
the random variables 
$W_{n} \defeq H_n\charfun{\{S_{n-1}\ge A_{n-1}\}}$,
and
the random variable $C_{\sigma}$ is a.s.~finite as in
\eqref{eq:s-upper-bound}. Now,
$Z_{\sigma,n}\to-\infty$ a.s.~as $n\to\infty$, so
$S_n<A_n$ a.s.~for sufficiently large $n$.

Consider then the case $(\tau_k)_{k\ge 1}$ are all finite and $M_n$
converges to a finite $M_\infty$.
Fix an $\epsilon>0$ and
let $N_0 = N_0(\omega,\epsilon)$ be such that for all $n\ge N_0$, 
it holds that
$\eta_{\sigma_n} H_{\sigma_n}\le \epsilon/3$ 
and that $|M_k-M_\infty| \le
\epsilon/3$ a.s.~for all $k\ge \sigma_n$.
The claim follows from the estimate
\[
    \begin{split}
    S_n &\le S_{\sigma_n} + (M_n - M_{\sigma_n})
    =
    S_{\sigma_n-1}
    + \eta_{\sigma_n}H_{\sigma_n}
    + (M_n - M_{\sigma_n}) \\
    &\le A_{\sigma_n} + \epsilon/3 + |M_n - M_\infty| + |M_\infty - M_{\sigma_n}|
    \le A_n + \epsilon
    \end{split}
\]
for all $n\ge N_0$.
\end{proof} %}}}

Hereafter, we shall consider the adaptive scaling process described in 
Section \ref{sec:notation}. One can give simple conditions under which
the result of Theorem \ref{th:stability} applies, since
\[
    \Econd{H(X_n,Y_{n+1})}{\F_n}
    = \acc(X_n,\Gamma_n) - \alpha^*,
\]
so by the boundedness of $H$ 
it is sufficient to find out when
$\acc(x,\gamma)$ is below or above $\alpha^*$.

%%%%%%%%%%%%%%%%%%%%
\begin{lemma} 
    \label{lemma:upper-proposal} %{{{
    Suppose $q$ satisfies Assumption \ref{a:proposal} and $q_{(s,\Sigma)}$
    is defined through \eqref{eq:proposal-gamma}.
    Then,
    there exists a constant $\bar{c}<\infty$ such that
    \begin{equation}
        \sup_{z\in\R^d,\,\Sigma\in\mathcal{C}_\zeta} q_{(s,\Sigma)}(z) 
        \le \bar{c}[\phi(s)]^{-d}
        \qquad
        \text{for all $s\in\R$.}
        \label{eq:proposal-sup}
    \end{equation}
    Moreover, for any $\epsilon>0$ there exist $M<\infty$ 
    such that for all $s\in\R$ and any plane $P\subset\R^d$
    \begin{eqnarray}
        \inf_{\Sigma\in\mathcal{C}_\zeta} 
        \int_{\ball(0,\phi(s)M)} q_{(s,\Sigma)}(z) 
          \ud z &\ge& 1-\epsilon 
        \label{eq:proposal-covering} \\
        \sup_{\Sigma\in\mathcal{C}_\zeta} \int_{\{d(z,P)\le \phi(s) M^{-1}\}}
          q_{(s,\Sigma)}(z) \ud z&\le& \epsilon.
        \label{eq:proposal-plane}
      \end{eqnarray}
\end{lemma} %}}}
The proof of Lemma \ref{lemma:upper-proposal} is straightforward; the details
are  given in Appendix \ref{sec:half-space}.

Let us then record a simple estimate on the expected acceptance rate 
when $\pi$ is compact and $S_n$ is large.
%%%%%%%%%%%%%%%%%%%%
\begin{proposition} 
    \label{prop:upper} %{{{
    Suppose $q$ satisfies Assumption \ref{a:proposal}
    and
    $\pi$ is supported on a compact set $\spc{X}\subset\R^d$
    and $\alpha^* > 0$.
    Then, there is $b<0$ and $a\in\R$ such that
    \begin{equation}
    \Econd{H(X_n,Y_{n+1})}{\F_n}
    \le b\qquad\text{whenever $S_n\ge a$}.
    \label{eq:acc-upper}  
    \end{equation}
\end{proposition}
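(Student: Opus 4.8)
The plan is to show that when the scaling parameter $\phi(S_n)=e^{S_n}$ is large, the proposed increments $\phi(S_n)Y_{n+1}$ are so large that the proposal $Y_{n+1}$ almost surely lands outside the compact support $\spc{X}$, forcing $\pi(Y_{n+1})=0$ and hence $\alpha(X_n,Y_{n+1})=0$. This makes the conditional expectation $\acc(X_n,S_n)-\alpha^*$ close to $-\alpha^*$, which is a negative constant, so one can take any $b\in(-\alpha^*,0)$. The only subtlety is a uniformity-in-$x$ argument: the bound must hold simultaneously for every possible value $X_n=x\in\spc{X}$.

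First I would fix $R<\infty$ with $\spc{X}\subset \ball(0,R)$, so that for any $x\in\spc{X}$ the proposal $y=x+\phi(s)z$ lies in $\spc{X}$ only if $\|z\|\le 2R/\phi(s)$. Consequently
\[
    \acc(x,s) = \int_{\spc{X}}\alpha(x,y)\,q_s(x-y)\,\ud y
    \le \int_{\{\|z\|\le 2R/\phi(s)\}} q(z)\,\ud z \eqdef g(s),
\]
using the substitution $y=x+\phi(s)z$ and $q_s(x-y)\,\ud y = q(z)\,\ud z$, together with $\alpha\le 1$. The right-hand side $g(s)$ does not depend on $x$. Since $q$ is a probability density (here standard Gaussian) and $\phi$ is increasing and surjective onto $(0,\infty)$, we have $2R/\phi(s)\to 0$ as $s\to\infty$, hence $g(s)\to 0$. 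Choose $b\defeq -\alpha^*/2<0$ and pick $a$ large enough that $g(a)\le \alpha^*/2$; since $g$ is non-increasing in $s$ (the ball shrinks), $g(s)\le\alpha^*/2$ for all $s\ge a$.

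Finally I would assemble the pieces: whenever $S_n\ge a$, on the event $\{X_n=x\}$ we get $\acc(x,S_n)\le g(S_n)\le g(a)\le\alpha^*/2$, uniformly in $x\in\spc{X}$, so
\[
    \Econd{H(S_n,X_n,Y_{n+1})}{\F_n} = \acc(X_n,S_n)-\alpha^* \le \alpha^*/2 - \alpha^* = -\alpha^*/2 = b,
\]
which is \eqref{eq:acc-upper}. No continuity or contour-regularity assumptions on $\pi$ are needed for this direction; only boundedness of the support matters. The main (minor) obstacle is merely bookkeeping the change of variables so that the resulting bound is genuinely independent of $x$ and of the $\F_n$-measurable $X_n$; there is no serious analytic difficulty, and indeed this is the easy half of the pair of bounds (the companion lower bound for $\theta_n$, Proposition \ref{prop:lower}, is where the $\alpha^*<1/2$ restriction and the real work enter).
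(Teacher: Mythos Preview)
Your argument is correct and is essentially identical to the paper's own proof: both bound $\acc(x,s)$ by the $q$-mass of a shrinking ball via the change of variables $y=x+\phi(s)z$, use compactness of $\spc{X}$ to get a radius independent of $x$, and take $b=-\alpha^*/2$. The only cosmetic differences are that the paper normalises to $0\in\spc{X}$ and uses $\mathrm{diam}(\spc{X})$ where you use $2R$.
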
 %}}}
\begin{proof} %{{{
    Compute for any $x\in\spc{X}$ and all $\gamma=(s,\mu,\Sigma)\in\spc{G}$
    \[\begin{split}
        \acc(x,\gamma)&=
        \int_{\R^d} \alpha(x,y) q_\gamma(x-y)\ud y
        \le \int_{\ball(x,\mathrm{diam}(\spc{X}))} q_\gamma(z)\ud z \\
        & \le \int_{\ball(x,\mathrm{diam}(\spc{X}))}
        \sup_{\Sigma\in\mathcal{C}_\zeta}q_{(s,\Sigma)}(z)\ud z 
        \le \bar{c}[\phi(s)]^{-d}\int_{\ball(0,\mathrm{diam}(\spc{X}))} \ud z
        \end{split}
    \]
    by \eqref{eq:proposal-sup} in 
    Lemma \ref{lemma:upper-proposal}. We may choose
    $a$ to be sufficiently large so that $\acc(x,\gamma)\le \alpha^*/2$
    whenever $s\ge a$.
    That is, \eqref{eq:acc-upper} holds with $b=-\alpha^*/2<0$, whenever
    $S_n\ge a$.
\end{proof} %}}}

Next, we shall consider the case $S_n$ small, simultaneously for both 
cases where $\pi$ is compactly supported and $\pi$ has a super-exponential
tail.
%%%%%%%%%%%%%%%%%%%%
\begin{proposition} 
    \label{prop:lower} %{{{
    Suppose that 
    there is a $t_0>0$ such that
    $L_{t_0} \defeq \{y\in\R^d:\pi(y)\ge t_0\}$
    is compact and $\pi$ is continuous on $L_{t_0}$.
    Moreover, suppose that 
    the sets in the collection $\{L_t\}_{0<t\le t_0}$ 
    have uniformly continuous normals 
    (Definition \ref{def:uniformly-continuous-normals})
    and $q$ satisfies Assumption \ref{a:proposal}.
    Then, for any $\alpha^* < 1/2$, there are $a\in\R$ and $b>0$ such that
    \begin{equation}
    \Econd{H(X_n,Y_{n+1})}{\F_n}
    \ge b\qquad\text{whenever $S_n\le a$.} 
    \label{eq:acc-lower}\\
    \end{equation}
\end{proposition} %}}}
Before giving the proof of Proposition \ref{prop:lower}, let us outline 
the simple intuition behind it. For all $s$
small enough and for any $\Sigma\in\mathcal{C}_\zeta$, 
the mass of $q_{(s,\Sigma)}$ is essentially concentrated on a
small ball $\ball(0,\epsilon)$. If one looks the target $\pi$
only on $\ball(x,\epsilon)$, there are,  roughly speaking, two alternatives. 
The first one is that
$\pi$ is approximately constant on that small ball and
$\acc(x,\gamma) \approx 1$. The second
alternative is that $\pi$ decreases very rapidly to \emph{one} direction, in which 
case the set $\{y:\pi(y)\ge \pi(x)\}$ looks like a
half-space on the ball $\ball(x,\epsilon)$, and consequently 
$\acc(x,\gamma) \apprge 1/2$.

Before the proof, we shall formulate a 
lemma on this `half-space approximation.'
%%%%%%%%%%%%%%%%%%%%
\begin{lemma} 
    \label{lemma:half-space} %{{{
    Suppose that the sets $\{A_i\}_{i\in I}$ with $A_i\subset\R^d$ 
    have uniformly continuous normals (Definition 
    \ref{def:uniformly-continuous-normals}). Then, for any 
    $\epsilon>0$, there is a $\delta>0$ such that for any $i\in
    I$, any $x\in A_i$ and any $r\in(0,\delta]$,
    there is a half-space $T$ such that
    $\ball(x,r)\cap T \subset \ball(x,r)\cap A_i $, 
    and the distance $d(x,T) \le \epsilon r$.
\end{lemma}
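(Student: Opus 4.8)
The plan is to reduce the statement to a purely local, quantitative version of the intuitive fact that a $C^1$ hypersurface looks flat at small scales, made uniform over the family $\{A_i\}$ by the equicontinuity hypothesis of Definition \ref{def:uniformly-continuous-normals}. Fix $\epsilon>0$ and let $\delta_0>0$ be the modulus furnished by the definition for the tolerance $\epsilon/2$ (say): whenever $x,y\in\partial A_i$ and $\|x-y\|\le\delta_0$, then $\|n_i(x)-n_i(y)\|\le\epsilon/2$. I expect to take $\delta$ of the form $c\,\delta_0$ with a small absolute constant $c=c(\epsilon)$, to be pinned down at the end.

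First I would dispose of the case $x\in\operatorname{int}A_i$ with $d(x,\partial A_i)\ge r$: then $\ball(x,r)\subset A_i$, and one can take $T$ to be any half-space through $x$, so $d(x,T)=0\le\epsilon r$. So assume $d(x,\partial A_i)<r\le\delta$; pick a nearest boundary point $x_0\in\partial A_i$ with $\|x-x_0\|<r$, let $\nu\defeq n_i(x_0)$ be the outer normal there, and set $T\defeq\{y:(y-x_0)\cdot\nu\le 0\}$ (the inner half-space tangent at $x_0$). Then $d(x,T)\le\|x-x_0\|<r$, but that is too weak; the point $x$ is within $r$ of the boundary but we need the half-space to pass near $x$. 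The correct choice is to translate: take $T\defeq\{y:(y-x)\cdot\nu\le -\epsilon r/2\}$ or rather to define $T$ through $x_0$ and then estimate $d(x,T)$ by how far $x$ sticks out past the tangent plane at $x_0$. The heart of the argument is the claim $\ball(x,r)\cap T\subset A_i$, which I would prove by a continuation/connectedness argument: any point $y\in\ball(x,r)\cap T$ can be joined to $x_0$ by a short path that, by the $C^1$ graph property with uniformly controlled normal, never crosses $\partial A_i$, so stays in $\overline{A_i}$; since $A_i$ is the closure of a domain and the path can be taken in the interior except possibly at $x_0$, we get $y\in A_i$.

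The main obstacle — and the step deserving the most care — is converting the equicontinuity of normals into the statement ``$\partial A_i$ is trapped between two parallel hyperplanes at height $O(\epsilon r)$ over the ball $\ball(x_0,2\delta)$.'' The clean way: since $\partial A_i$ is locally a $C^1$ graph, near $x_0$ write $\partial A_i$ as $\{(z,g(z))\}$ in coordinates with the $e_d$-axis along $\nu=n_i(x_0)$, so $\nabla g(0)=0$; the outer normal at $(z,g(z))$ is $(-\nabla g(z),1)/\sqrt{1+|\nabla g(z)|^2}$, and the hypothesis $\|n_i((z,g(z)))-\nu\|\le\epsilon/2$ for $\|(z,g(z))-x_0\|\le\delta_0$ forces $|\nabla g(z)|\le C\epsilon$ on a neighborhood of $0$ of radius $\gtrsim\delta_0$ (shrinking $\delta$ if necessary so that the graph over the relevant $z$-ball stays inside the $\delta_0$-ball). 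Integrating, $|g(z)|\le C\epsilon|z|\le 2C\epsilon\delta\le 2C\epsilon r\cdot(\delta/r)$, but since we only control this for $r\le\delta$ I would instead phrase the bound as $|g(z)-g(0)|\le C\epsilon\|z\|$ for $\|z\|\le 3r$, giving that $\partial A_i\cap\ball(x_0,3r)$ lies within vertical distance $3C\epsilon r$ of the tangent plane through $x_0$. Then $T\defeq\{y:(y-x_0)\cdot\nu\le -3C\epsilon r\}$ (inner side) satisfies $\ball(x,r)\cap T\subset\ball(x,r)\cap A_i$: a point in $T\cap\ball(x,r)$ is on the interior side of the graph and within $\ball(x_0,3r)$, hence in $A_i$ by the graph description and connectedness; and $d(x,T)\le\|x-x_0\|+3C\epsilon r<r+3C\epsilon r$, which is still not $\le\epsilon r$. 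The resolution is that the only genuinely troublesome configuration is $x$ near $\partial A_i$: then one replaces the estimate $\|x-x_0\|<r$ by the sharper fact that $x$, lying in $A_i$, is on the inner side of the near-flat graph, so $(x-x_0)\cdot\nu\le 3C\epsilon r$, whence with the recentred half-space $T\defeq\{y:(y-x_0)\cdot\nu\le 3C\epsilon r\}$ one gets simultaneously $x\in\overline{T}$ so $d(x,T)\le$ width tolerance $\le 6C\epsilon r$ (by including the strip), and $\ball(x,r)\cap T\subset A_i$ after absorbing the strip width into the constant. Finally, choosing the absolute constant $c$ in $\delta=c\delta_0$ and relabelling $\epsilon\mapsto\epsilon/(6C)$ at the outset yields the clean bound $d(x,T)\le\epsilon r$ with $\ball(x,r)\cap T\subset\ball(x,r)\cap A_i$, and since each $A_i$ has only finitely many components one handles the component containing $x_0$ and notes the others are irrelevant at scale $r\le\delta$ once $\delta$ is smaller than the minimal intercomponent distance — which, however, is \emph{not} assumed uniform, so instead I would note that the half-space statement is local to the component containing $x$ and no uniform separation is needed. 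This completes the plan.
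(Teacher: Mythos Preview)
Your plan is correct in outline but takes a different route from the paper. You argue via a local $C^1$ graph representation of $\partial A_i$ over the tangent plane at a nearby boundary point $x_0$: equicontinuity of normals gives $|\nabla g|\le C\epsilon$, integrating yields $|g(z)|\le C\epsilon|z|$, and a half-space shifted by $O(\epsilon r)$ below the tangent plane then sits inside $A_i$ on the ball. The paper avoids the graph setup entirely: at a boundary point $y\in\ball(x,r)\cap\partial A_i$ it introduces the open cone $C_-=\{y+z:n_i(y)\cdot z<-\epsilon'\|z\|\}$ and proves $\ball(y,2\delta)\cap C_-\subset A_i$ by a one-line segment argument (a segment from $y$ to some $u\in C_-\setminus A_i$ would cross $\partial A_i$ at $u_0$ with $n_i(u_0)\cdot(u-y)\ge 0$, contradicting $\|n_i(u_0)-n_i(y)\|\le\epsilon'$). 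The half-space $T$ is then the translate of the tangent half-space by $-2\epsilon' r\,n_i(y)$, which on $\ball(y,2r)$ lies inside $C_-$; and $x\in A_i$ forces $x\notin C_+$, giving $n_i(y)\cdot(x-y)\le\epsilon' r$ and hence $d(x,T)\le 3\epsilon' r$. The cone argument is cleaner precisely because it bypasses the one step in your plan that needs genuine care: that the graph representation is valid on a ball of size \emph{uniform in $i$}. This does follow from the hypothesis --- $\|n_i(p)-n_i(q)\|<2$ on $\ball(x_0,2\delta)$ already excludes a second boundary sheet there, and the small-gradient bound lets one continue the graph over a tangential disc of radius comparable to $\delta$ --- but your parenthetical ``shrinking $\delta$ if necessary'' does not supply it. Your half-space bookkeeping is also tangled; the clean choice is $T=\{y:(y-x_0)\cdot\nu\le -3C\epsilon r\}$ for the inclusion, after which $d(x,T)\le 6C\epsilon r$ from $(x-x_0)\cdot\nu\le 3C\epsilon r$, or even $\le 3C\epsilon r$ once you use that $x_0$ being nearest to $x$ forces $(x-x_0)\cdot\nu\le 0$.
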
 %}}}
The claim is geometrically evident. The technical verification is given
in Appendix \ref{sec:half-space}.
%%%%%%%%%%%%%%%%%%%%
\begin{prf-prop-lower} %{{{
    Fix an $\epsilon^*\in(0,1)$ and
    let $M=M(\epsilon^*)$ be the constant from Lemma
    \ref{lemma:upper-proposal} applied with $\epsilon=\epsilon^*$.
    
    By compactness of $L_{t_0}$ and continuity of $\pi$
    one can find $\delta_1>0$ such that for
    all $x,y\in L_{t_0}$ with $\|x-y\|\le \delta_1$, it holds that
    $|\log\pi(x) - \log\pi(y)| \le \epsilon^*$
    so that
    \[
        1 - \alpha(x,y)
        = e^0 - e^{\min\{0,\log\pi(y)-\log\pi(x)\}}
        \le |\log\pi(y)-\log\pi(x)| \le \epsilon^*.
    \]
    Let $\delta_2>0$ be sufficiently small to satisfy 
    Lemma \ref{lemma:half-space} with the choice $\epsilon=M^{-2}$.
        
    Choose a small enough $a\in\R$ so that $2\phi(a) M \le
    \min\{\delta_1,\delta_2\}$.
    Let $s\le a$, denote $r_s \defeq \phi(s)M$, and write
    for any $x\in L_{t_0}$
    \[
        \begin{split}
        \int_{\spc{X}} \alpha(x,y) q_\gamma(x-y) \ud y
        &\ge \int_{\ball(x,r_s)\cap L_{t_0}} \alpha(x,y) q_\gamma(x-y) \ud y
        \\
        &\ge (1-\epsilon^*) 
        \int_{\ball(x,r_s)\cap L_{t_0}} q_\gamma(x-y) \ud y
        \\
        \end{split}
    \]
    since $2r_s\le \delta_1$.
    Denote by $T$ the half-space from Lemma \ref{lemma:half-space}, such that
    $\ball(x,r_s)\cap T \subset \ball(x,r_s)\cap L_{t_0}$ and
    the distance $d(x,T) \le M^{-2} r_s$.
    One obtains
    \[
        \begin{split}
        \int_{\spc{X}} \alpha(x,y) &q_\gamma(x-y) \ud y
        \ge (1-\epsilon^*) 
        \int_{\ball(x,r_s)\cap T} q_\gamma(x-y) \ud y \\
        &\ge (1-\epsilon^*) 
        \int_{\ball(x,r_s)\cap \tilde{T}} q_\gamma(x-y) \ud y 
        - \int_{\{d(y,P)\le M^{-2}r_s\}} q_\gamma(x-y) \ud y\\
        &\ge \frac{1}{2}(1-\epsilon^*)^2 - \epsilon^*
        \end{split}
    \]
    where $\tilde{T}$ is the half-space with the boundary plane $P$
    parallel to the boundary of $T$, and passing through $x$.
    Lemma \ref{lemma:upper-proposal} yields the
    last inequality, specifically 
    \eqref{eq:proposal-covering} with the
    symmetry of $q_\gamma$ and \eqref{eq:proposal-plane}.
    The same estimate clearly holds for any $x\in L_t$ with $t\in(0,{t_0})$.

    To conclude, for any $\alpha^*<1/2$ one can choose
    a sufficiently small
    $\epsilon^*=\epsilon^*(\alpha^*)>0$
    such that
    for all $x\in\spc{X}$ and
    for any $\gamma=(s,\mu,\Sigma)$ with $s\le a$
    \[
            \acc(x,\gamma) = \int_{\spc{X}} \alpha(x,y) q_\gamma(x-y) \ud y
        \ge \frac{1}{2} - \frac{1}{2}\left(\frac{1}{2} - \alpha^* \right).
    \]
    This implies \eqref{eq:acc-lower} with $b = (1/2-\alpha^*)/2>0$.
\end{prf-prop-lower} %}}}
As an easy corollary of the 
propositions above, one establishes the stability of the adaptive scaling
process on the case of compactly supported $\pi$.
%%%%%%%%%%%%%%%%%%%%
\begin{corollary} 
    \label{cor:compact-stability} %{{{
    Suppose $q$ and $(\eta_n)_{n\ge 2}$ satisfy Assumptions \ref{a:proposal}
    and \ref{a:step-size}, respectively,
    $\pi$ has a compact support $\spc{X}\subset\R^d$ and 
    $\pi$ is continuous, bounded and bounded away from zero on $\spc{X}$. 
    Moreover, assume
    that $\spc{X}$ has a uniformly continuous normal
    (Definition \ref{def:uniformly-continuous-normals}).
    Then, for the general adaptive scaling process in Section
    \ref{sec:notation}
    with any $\alpha^*\in\big(0,\frac{1}{2}\big)$
    there exist a.s.~finite random variables $A_1$ and $A_2$ such that
    for all $n\ge 1$
    \begin{equation}
        A_1 \le S_n \le A_2.
        \label{eq:as-stability}
    \end{equation}
\end{corollary} %}}}
\begin{proof} %{{{
    The conditions of Propositions \ref{prop:upper} and \ref{prop:lower} are
    satisfied, so there are constants $-\infty<a_1<a_2<\infty$ and $b<0$
    such that
    \begin{align*}
    \Econd{H(X_n,Y_{n+1})}{\F_n}
    &\le b &&\text{whenever} &S_n&\ge a_2, \\
    \Econd{H(X_n,Y_{n+1})}{\F_n}
    &\ge -b &&\text{whenever} &S_n&\le a_1.
    \end{align*}
    Theorem \ref{th:stability} can be applied to $-S_n$ and $S_n$,
    since by the boundedness of $H$ 
    \eqref{eq:stability-martingale-squares} is implied
    by $\sum \eta_n^2 <\infty$. Theorem \ref{th:stability}
    guarantees that 
    $a_1\le\liminf_{n\to\infty} S_n$ and 
    $\limsup_{n\to\infty} S_n \le a_2$, respectively, from which one obtains a.s.~finite
    $A_1$ and $A_2$ for which \eqref{eq:as-stability} holds.
\end{proof} %}}}

The rest of this section considers targets $\pi$ with an unbounded support.
Under a suitably regular $\pi$, it is shown that the growth of
$S_n$ can be controlled. 
The following estimate for the at most polynomial growth of $\phi(S_n)$ is
crucial for the ergodicity result in Theorem
\ref{thm:super-exp-ergodicity}.
%%%%%%%%%%%%%%%%%%%%
\begin{proposition} 
    \label{prop:upper-polynomial} %{{{
    Suppose 
    $\pi$ 
    fulfils Assumption 
    \ref{a:super-exp} and there is a $t_0>0$ such that
    the collection of contour sets $\{x\in\R^d:\pi(x)\ge t\}_{0<t\le t_0}$
    have uniformly continuous normals
    (Definition \ref{def:uniformly-continuous-normals}).
    Suppose also that $\phi$, $q$ and $(\eta_n)_{n\ge 2}$ satisfy
    Assumptions \ref{a:scaling-function}, \ref{a:proposal} and
    \ref{a:step-size}, respectively.
    Then, for the general adaptive scaling process in Section
    \ref{sec:notation} with $\alpha^*\in\big(0,\frac{1}{2}\big)$, and for any $\beta>0$,
    there exist an a.s.~positive $\Theta_1=\Theta_1(\omega)$ and an a.s.~finite
    $\Theta_2=\Theta_2(\omega,\beta)$ such that for all $n\ge 1$
    \[
    \Theta_1 \le \phi(S_n) \le \Theta_2 n^{\beta}.
    \]
\end{proposition} %}}}
Before the proof, let us consider an estimate of $\acc\big(x,(s,\mu,\Sigma)\big)$ depending on both
$x$ and $s$.
%%%%%%%%%%%%%%%%%%%%
\begin{lemma} 
    \label{lemma:alpha-estim} %{{{
    Assume $q$ satisfies Assumption \ref{a:proposal} and
    $\pi$ satisfies Assumption \ref{a:super-exp}.
    Then, for any $\epsilon>0$, there is a constant $c=c(\epsilon)\ge 1$ such that
    $\acc\big(x,(s,\mu,\Sigma)\big) \le \epsilon$
    for all $\phi(s) \ge c \max\{1,\|x\|\}$.
\end{lemma}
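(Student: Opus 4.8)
The plan is to change variables, peel off two easy pieces of the integral, and reduce everything to an estimate on the $q$-measure of a thin slab, for which the geometry of the level sets of $\pi$ is the decisive input. Writing $y=x+\phi(s)w$ and substituting,
\[
    \acc(x,s)=\int_{\R^d}\min\Bigl\{1,\frac{\pi(x+\phi(s)w)}{\pi(x)}\Bigr\}\,q(w)\,\ud w .
\]
Fix $\epsilon>0$ and choose $\delta_0>0$ with $\int_{\ball(0,\delta_0)}q(w)\,\ud w\le\epsilon/3$. Splitting the domain of integration into $\{\|w\|\le\delta_0\}$ (which contributes at most $\epsilon/3$ since the integrand is $\le 1$), $\{\|w\|>\delta_0,\ \pi(x+\phi(s)w)<(\epsilon/3)\pi(x)\}$ (where the integrand is $<\epsilon/3$, contributing $<\epsilon/3$), and the remainder $B_s(x)\defeq\{w:\|w\|>\delta_0,\ \pi(x+\phi(s)w)\ge(\epsilon/3)\pi(x)\}$, it remains to show that $\int_{B_s(x)}q\le\epsilon/3$ whenever $\phi(s)\ge c\max\{1,\|x\|\}$ with $c$ large enough. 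Note that $B_s(x)$ consists exactly of those $w$ with $y=x+\phi(s)w\in L\defeq L_{(\epsilon/3)\pi(x)}$.

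Next I would treat the two ranges of $\|x\|$ separately. Pick $r_0$ large (to be fixed). If $\|x\|\le r_0$, then $\pi(x)\ge p_0\defeq\inf_{\ball(0,r_0)}\pi>0$ by Assumption \ref{a:super-exp}; for $w\in B_s(x)$ one has $\|x+\phi(s)w\|\ge\delta_0\phi(s)-r_0\ge c\delta_0-r_0$, and since $\pi(y)\to0$ as $\|y\|\to\infty$, choosing $c$ large forces $\pi(x+\phi(s)w)<(\epsilon/3)p_0\le(\epsilon/3)\pi(x)$, so $B_s(x)=\emptyset$. The interesting case is $\|x\|>r_0$. Here the radial decay coming from \eqref{eq:super-exp} shows that, for $r_0$ large, $\pi$ is strictly decreasing along rays outside $\ball(0,r_0)$, so $L$ is star-shaped about the origin, and $x$ lies just inside $\partial L$ (it is the radial boundary point of $L_{\pi(x)}\subset L$ in its own direction). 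The geometric fact I want to use is that, by the contour regularity \eqref{eq:reg-contour} (which gives the outer normals of $\partial L$ near $x$ a uniform positive radial component), the set $L$ is contained in a slab
\[
    \Bigl\{y\in\R^d:\bigl|\langle y-x,\ x/\|x\|\rangle\bigr|\le C\|x\|\Bigr\}
\]
orthogonal to $x$, with $C$ independent of $x$. Granting this, $w\in B_s(x)$ implies $|\langle\phi(s)w,x/\|x\|\rangle|\le C\|x\|$, i.e. $|\langle w,x/\|x\|\rangle|\le C\|x\|/\phi(s)\le C/c$; thus $B_s(x)$ lies in a slab of width $2C/c$ whose $q$-measure is at most $(2C/c)$ times a bound on the one-dimensional marginals of $q$, hence $\le\epsilon/3$ once $c$ is large. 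Combining the three contributions gives $\acc(x,s)\le\epsilon$ for all $\phi(s)\ge c(\epsilon)\max\{1,\|x\|\}$.

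The hard part is the slab estimate: controlling, uniformly in $x$, the extent of the level set $L$ in the outward radial direction at $x$ by a fixed multiple of $\|x\|$. This is a quantitative level-set-geometry statement of the kind underlying the Jarner--Hansen drift analysis \cite{jarner-hansen}: one must combine the contour regularity \eqref{eq:reg-contour} with the star-shapedness furnished by \eqref{eq:super-exp}, and possibly split $\partial L$ into the piece near $x$ (handled by a local tangent-plane argument as in Lemma \ref{lemma:half-space}) and the remote, essentially orthogonal piece (handled by the super-exponential radial decay). Everything else in the proof — the choice of $\delta_0$, the bounded-$x$ case, and the final slab measure bound — is routine.
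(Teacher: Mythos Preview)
Your decomposition is correct and differs genuinely from the paper's. The paper works with the exact level set $L_{\pi(x)}$, uses the contour condition~\eqref{eq:reg-contour} to obtain $L_{\pi(x)}\subset\ball(0,M\|x\|)$ (stated there in one line), bounds the near part $\{d(y,L_{\pi(x)})\le\|x\|\}$ via this containment, and then spends the bulk of the argument on a polar integration exploiting~\eqref{eq:super-exp} to show that $\int_{\{d(y,L_{\pi(x)})>\|x\|\}}\alpha(x,y)\,\ud y$ is uniformly bounded; your passage to the larger level set $L=L_{(\epsilon/3)\pi(x)}$ trades this tail estimate on $\alpha$ for the trivial bound $\alpha<\epsilon/3$ outside $L$. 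What you call the hard part is in fact the \emph{same} one-line containment $L_{\pi(x)}\subset\ball(0,M\|x\|)$ --- which follows from~\eqref{eq:reg-contour} applied globally on $\partial L_{\pi(x)}$, not from a local tangent-plane argument as in Lemma~\ref{lemma:half-space} --- together with the easy observation from~\eqref{eq:super-exp} that along every ray the radial function of $L$ exceeds that of $L_{\pi(x)}$ by at most $\log(3/\epsilon)\big/(\gamma r_1^{\rho-1})=O(1)$; hence $L\subset\ball(0,M'\|x\|)$, a ball rather than just a slab, and after your change of variables $B_s(x)\subset\ball(0,(M'+1)/c)$, which also makes your first piece $\{\|w\|\le\delta_0\}$ unnecessary. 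In short, both proofs rest on the same geometric containment; your route replaces the paper's tail integral of $\alpha$ by a one-line radial comparison between two nearby level sets, which is arguably cleaner once you recognise that the ``slab estimate'' needs no new idea.
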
 %}}}
\begin{proof} %{{{
Let $r_1 \ge 1$ be sufficiently large so that for some $\nu>0$ 
it holds that 
$\frac{x}{\|x\|} \cdot \frac{\nabla \pi(x)}{\|\nabla
 \pi(x) \|} < -\nu$ and 
$\frac{x}{\|x\|^\rho} \cdot \nabla \log \pi (x) < -\nu$ for 
all $\|x\|\ge r_1$.
Increase $r_1$, if necessary, so that for any
$\|x\| \ge r_1$ one can write
$L_{\pi(x)} = \{y:\pi(y)\ge \pi(x)\} = \{ru:u\in S^d,\,0\le r\le g(u)\}$ 
where $S^d \defeq \{u\in\R^d:\|u\|=1\}$ is the unit sphere and
the function $g:S^d\to(0,\infty)$ parameterises the boundary of $L_{\pi(x)}$.
Notice also that the contour normal condition implies the existence of an $M\ge 1$
such that
$L_{\pi(x)} \subset \ball(0,M\|x\|)$ for all $\|x\|\ge r_1$
\cite[see][Lemma 22]{saksman-vihola}.

Write for $\|x\| \ge r_2 \defeq M r_1$ and denoting $T_x \defeq 
\{d(y,L_{\pi(x)})> \|x\|\}$
\[
    \begin{split}
    \acc(x,\gamma) &=
    \int_{\R^d} \alpha(x,y) q_\gamma(x-y) \ud y \\
    &\le \int_{\R^d\setminus T_x} q_\gamma(x-y) \ud y
    + \sup_{y\in\R^d} q_\gamma(x-y)
    \int_{T_x} \alpha(x,y) \ud y.
    \end{split}
\]
The first term can be estimated from above by \eqref{eq:proposal-sup} of
Lemma \ref{lemma:upper-proposal}
\[
    \int_{\ball(0, M\|x\| + \|x\|)} q_\gamma(x-y) \ud y
    \le \bar{c}[\phi(s)]^{-d} \int_{\ball(0, (M+1)\|x\|)}\ud z
    \le c_1 [\phi(s)]^{-d}\|x\|^d
    \le \frac{\epsilon}{2}
\]
whenever $\phi(s)\ge (c_1 2/\epsilon)^{1/d} \|x\|$.

For the integral in the latter term, 
we use polar integration to estimate
\[
    \int_{T_x} \alpha(x,y) \ud y
    \le c_d \sup_{u\in S^d} \int_{r> g(u) + \|x\|}^\infty r^{d-1}
    e^{\log\pi(ru)-\log\pi(g(u)u)} \ud r
\]
where $c_d$ is the surface measure of the sphere $S^d$.
Since $\|x\| \ge r_2$, one has
that $g(u)\ge r_1 \ge 1$, and
from the gradient decay condition, one obtains that 
for $r>g(u)+1$ 
\[
    \begin{split}
    \log\pi(ru) - \log\pi(g(u)u) &= \int_{g(u)}^r \frac{tu}{\|tu\|} \cdot
    \nabla \log\pi(tu) \ud t \le -\nu \int_{g(u)}^r t^{\rho -1} \ud t \\
    &\le -\nu g(u)^{\rho-1} [r-g(u)]
    \end{split}
\]
from which
\begin{multline*}
    \int_{r> g(u) + \|x\|}^\infty r^{d-1}
    e^{\log\pi(ru)-\log\pi(g(u)u)} \ud r \\
    \le \int_0^\infty e^{-\frac{\nu w}{2}} \ud w
    \sup_{r>g(u) + \|x\|}r^{d-1}
    e^{-\frac{\nu}{2} g(u)^{\rho-1} [r-g(u)]}.
\end{multline*}
Consequently,
\[
    \int_{T_x} \alpha(x,y) \ud y  
    \le c_d \frac{2}{\nu}
    \sup_{\tilde{g}\ge 1,\,\tilde{r}>1} \exp \left[(d-1)\log
    (\tilde{g}+\tilde{r}) -
    \frac{\nu}{2} \tilde{g}^{\rho-1}\tilde{r} \right] \le c_2
\]
with a finite constant $c_2$ whenever $\|x\|\ge r_2$.

To sum up, there is a $c_3>0$ such that for any $\|x\| \ge r_2$ and
any $s$ satisfying
\[
    \phi(s) \ge c_3 \max\{1,\|x\|\} \ge 
    \max\bigg\{\left(\frac{2 c_1 }{\epsilon}\right)^{1/d}\|x\|,\,
      \left(\frac{2\bar{c}c_2}{\epsilon}\right)^{1/d}\bigg\},
\]
it holds that $\acc\big(x,(s,\mu,\Sigma)\big) \le \epsilon$. 
For any $\|x\| < r_2$ there is a $r_2\le \|x_0\|\le Mr_2$ 
such that $\pi(x_0) \le \pi(x)$. Consequently, $\alpha(x,y) \le
\alpha(x_0, y)$ for all $y\in \R^d$ and therefore 
\begin{equation*}
    \begin{split}
    \acc(x,\gamma) 
    &\le \int_{\R^d} \alpha(x_0,y)q_\gamma(x-y)\ud y \\
    &\le \int_{\R^d\setminus T_{x_0}} q_\gamma(x-y) \ud y
    + \sup_{y\in\R^d} q_\gamma(x-y)
    \int_{T_{x_0}} \alpha(x_0,y) \ud y.
    \end{split}
\end{equation*}
Repeating the arguments above, there is a finite constant $c_4$ such that
$\acc\big(x,(s,\mu,\Sigma)\big) \le \epsilon$ for all 
$(\mu,\Sigma)\in\spc{S}_\zeta$
and for all $s\in\R$ such that $\phi(s) \ge c_4 \max\{1,\|x\|\}$.
\end{proof} %}}}
Having Lemma \ref{lemma:alpha-estim} and the lower bound from
Proposition \ref{prop:lower}, the proof of Proposition
\ref{prop:upper-polynomial} can be obtained by applying the
growth condition on $\|X_n\|$ established in \cite{saksman-vihola}.
%%%%%%%%%%%%%%%%%%%%
\begin{prf-upper-polynomial} %{{{
Proposition \ref{prop:lower} applied with Theorem \ref{th:stability}
for $-S_n$ 
gives an a.s.~finite $A_1$ such that $A_1 \le S_n$ for all $n\ge 1$.
The random variable $\Theta_1 \defeq \phi(A_1)$ is
a.s.~positive, showing the lower bound.

To check the polynomial growth condition for $\phi(S_n)$, it is first
verified that $\|X_n\|$
grows at most polynomially. 
Fix an $\epsilon>0$ and let $\theta_1=\theta_1(\epsilon)>0$ and
$a_1=a_1(\epsilon)\in \R$
be such that $\theta_1 = \phi(a_1)$, and that $\P(B_1)\ge 1-\epsilon$, 
with $B_1 \defeq \{\Theta_1 \ge \theta_1\}  = \{A_1 \ge a_1\}$.
Let $V(x) \defeq c_\pi \pi^{-1/2}(x)$, where the constant 
$c_\pi \defeq [\sup_x \pi(x)]^{1/2}$ ensures that $V \ge 1$.
Proposition \ref{prop:geom-bound} in Appendix
\ref{sec:metropolis}
shows that the drift inequality 
\begin{equation}
    P_{(s,\Sigma)} V(x)
\le V(x) + b
\label{eq:drift-bound}
\end{equation}
holds for all $\Sigma\in\mathcal{C}_d$ and 
$\phi(s)\ge \theta_1>0$ with some $b=b(\theta_1)<\infty$.
Construct an auxiliary
process $(X_n',\Gamma_n')_{n\ge 1}$ coinciding with $(X_n,\Gamma_n)_{n\ge 1}$
in $B_1$ by setting
$(X_n',\Gamma_n') = (X_{\tau_n},\Gamma_{\tau_n})$ where the stopping times $\tau_n$
are defined as
\[
    \tau_n \defeq \begin{cases}
    n,&\text{if $\phi(S_k)\ge \theta_1$ for all $1\le k\le n$} \\
    \inf \{1\le k\le n-1: \phi(S_{k+1}) < \theta_1 \},
    &\text{otherwise.}
    \end{cases}
\]
Having the inequality \eqref{eq:drift-bound}, set $\beta' = \kappa^{-1}\beta$ 
where the constant $\kappa\ge 1$ is from Assumption \ref{a:scaling-function}
and use
Proposition 7 of \cite{saksman-vihola} 
to obtain the bound $\|X_n'\| \le \Theta_\epsilon n^{\beta'}$ for some
a.s.~finite $\Theta_\epsilon$.
The $\epsilon>0$ was arbitrary, so one can let $\epsilon\to 0$ and obtain
an a.s.~finite $\Theta$ such that $\|X_n\| \le \Theta n^{\beta'}$.
Applying Lemma \ref{lemma:alpha-estim}, 
one obtains that $\acc\big(X_n,(S_n,\Sigma_n)\big) \le \alpha^*/2$ whenever
$\phi(S_n) \ge \Theta' n^{\beta'}$ with $\Theta' \defeq c_1 \max\{1,\Theta\}$.

Fix again an $\epsilon>0$ and let 
$\theta_2=\theta_2(\epsilon)<\infty$ be such that $\P( B_2) \ge 1-\epsilon$
where $B_2\defeq\{\Theta' \le \theta_2\}$. Construct an auxiliary
process $(X_n',S_n')_{n\ge 1}$ coinciding with $(X_n,S_n)_{n\ge 1}$
in $B_2$ by stopping the process if $\phi(S_k)>\theta_2 k^{\beta'}$ as in
the construction above.
Theorem \ref{th:stability} ensures that 
\[
    \limsup_{n\to\infty} [S_n' - \tilde{a}_n ] \le 0
\]
where $\tilde{a}_n$ are defined so that $\phi(\tilde{a}_n) = \theta_2
n^{\beta'}$.
That is, $S_n' \le \tilde{a}_n + E_n$ with $E_n \to 0$ almost surely.
Consider Assumption \ref{a:scaling-function} and
take $N_0$ so large that $E_{n} < h$ for all $n\ge N_0$. Then,
$\phi(x+h) = \phi(x) + h\phi'(x+\bar{h})$ for some $0\le\bar{h}\le h$, and hence
$\phi(x+h) \le c_2 \max\{1,\phi(x)^\kappa\}$. 
For $n\ge N_0$, one has
\[
    \phi(S_n') \le \phi(\tilde{a}_n + E_n) \le c_2
    \max\{1,\phi(\tilde{a}_n)^\kappa\}
    = c_2 \max\{1,\theta_2^\kappa n^{\kappa\beta'}\} \le \theta_2' n^\beta
\]
for some finite $\theta_2'$.
Summing up, there is an a.s.~finite $\Theta_2'$ such that
\[
    \phi(S_n') \le \Theta_2' n^\beta
\]
on $B_2$. Finally, letting $\epsilon \to 0$, one can find 
an a.s.~finite $\Theta_2$ such that $\phi(S_n) \le \Theta_2 n^\beta$.
\end{prf-upper-polynomial} %}}}

%}}}

%%%%%%%%%%%%%%%%%%%%%%%%%%%%%%%%%%%%%%%%%%%%%%%%%%%%%%%%%%%%%%%%%%%%%%%%%%%%%%%
\section{Ergodicity} 
\label{sec:ergodicity} %{{{

Section \ref{sec:stability} established stability or controlled growth for
the adaptive scaling process of Section \ref{sec:notation}.
This section employs these
results to prove strong laws of large numbers in 
Theorems \ref{thm:compact-stable} and
\ref{thm:super-exp-ergodicity} for the ASM and the ASWAM
processes defined in Section \ref{sec:main},
relying on the results introduced in \cite{saksman-vihola}. For this
purpose, consider the following theoretical adaptation
framework introduced in \cite{saksman-vihola} using a
sequence of restriction sets $K_1\subset K_2\subset \cdots
\subset K_n\subset \cdots\subset \spc{G}$.

Assume $(\tilde{X}_n,\tilde{Y}_n,\tilde{\Gamma}_n)_{n\ge 1}$ follow the general 
adaptation framework as described in Section \ref{sec:notation}.
Assume $\tilde{\Gamma}_1\equiv \tilde{\gamma}_1\in K_1$ and instead of
\eqref{eq:s-rec} let $(\tilde{\Gamma}_n)_{n\ge 1}$ follow the
`truncated' recursion
\begin{equation}
    \tilde{\Gamma}_{n+1} = \sigma_{K_{n+1}}\big(\tilde{\Gamma}_n,\,
    \eta_{n+1}\hat{H}(\tilde{X}_{n}, \tilde{Y}_{n+1}) \big)
    \label{eq:s-rec-trunc}
\end{equation}
where the restriction function
$\sigma_{K}:\spc{G}\times\bar{\spc{G}}\to\spc{G}$
is defined as
\begin{equation*}
    \sigma_K(\gamma,\gamma') \defeq \begin{cases}
    \gamma+\gamma',&\text{if $\gamma+\gamma'\in K$} \\
    \gamma,&\text{otherwise},
    \end{cases}
\end{equation*}
$\bar{\spc{G}}\defeq \R\times\R^d\times\R^{d\times d}\supset\spc{G}$
and the function $\hat{H}:\spc{G}\times\spc{X}^2\to\bar{\spc{G}}$ is defined
as
\[
    \hat{H}\big((s,\mu,\Sigma), x,y\big)
    = \begin{bmatrix}
      H(x,y) \\
      x-\mu \\
      (x-\mu)(x-\mu)^T-\Sigma
      \end{bmatrix}
\]
That is, $\sigma_{K_n}$ ensures that $\tilde{\Gamma}_n\in K_n$ for all $n\ge 1$.
Observe that such a `truncated process' can be constructed using an `original
process' $(X_n,\Gamma_n)_{n\ge 1}$ from Section \ref{sec:notation}
and the random variables $(Y_n,U_n)_{n\ge 2}$ following \eqref{eq:x-rec} and
\eqref{eq:s-rec}, so that the two processes coincide in the set
$\cap_{n=1}^\infty \{\Gamma_n\in K_n\}$. 

Before stating the ergodicity result from \cite{saksman-vihola} 
for this truncated chain,
four technical assumptions are listed, which must hold for some constants
$c\ge 1$ and $\beta\ge 0$ and $\iota\in\big(0,\frac{1}{2}\big)$.
\begin{enumerate}[({A}1)]
\item \label{a:invariance} %{{{
  For all measurable $A\subset\spc{X}$, it holds that 
  $\P(\tilde{X}_{n+1}\in A\mid \F_n) = P_{\tilde{\Gamma}_n}(\tilde{X}_n,A)$
  almost surely, and
  for each $\gamma\in\spc{G}$, the transition probability 
  $P_\gamma$ has $\pi$ as the unique invariant
  distribution. %}}}
\item \label{a:uniform-drift-mino} %{{{
  For each $n\ge 1$, the following uniform drift and minorisation 
  conditions hold for all $\gamma\in K_n$, 
  for all $x\in\spc{X}$ 
  and all measurable $A\subset\spc{X}$
\begin{align*}
    P_\gamma V(x) &\le \lambda_n V(x)  + b_n \charfun{C_n}(x)\\
    P_\gamma(x,A) &\ge \delta_n \charfun{C_n}(x)\nu_\gamma(A)
\end{align*}
where $C_n\subset\spc{X}$ is a subset (a minorisation set),
$V:\spc{X}\to[1,\infty)$ is a drift function such that $\sup_{x\in C_n}V(x)
\le b_n$ and $\nu_\gamma$ is a probability measure on $\spc{X}$ concentrated on $C_n$. 
Furthermore, the
constants $\lambda_n\in(0,1)$ and $b_n\in(0,\infty)$
are increasing, $\delta_n\in(0,1]$ is decreasing with respect to $n$
and they are polynomially bounded so that
\begin{equation*}
    \max\{ (1-\lambda_n)^{-1}, \delta_n^{-1}, b_n\}    \le c n^{\beta}.
\end{equation*} %}}}
\item \label{a:kernel-lip} %{{{
  For all $n\ge 1$ and any $r\in(0,1]$, there is $c'=c'(r)\ge 1$ such that
  for all $\gamma$ and $\gamma'$ in $K_n$,
  \begin{equation*}
      \|P_\gamma f - P_{\gamma'} f \|_{V^r} 
      \le c' n^{\beta} \norm{f}_{V^r} | \gamma - \gamma'|
  \end{equation*} 
  with the norm on the space $\bar{\spc{G}}$ defined as 
  $|\gamma|=|(s,\mu,\Sigma)| = |s|+\|\mu\|+\|\Sigma\|$.
  %}}}
\item \label{a:adapt-lip} %{{{
  The inequality
  $|\hat{H}(\gamma,x,y)|\le c n^\beta V^{\iota}(x)$
  holds for all $\gamma\in K_n$ and all 
  $x,y\in\spc{X}$.
  %}}}
\end{enumerate}
%%%%%%%%%%%%%%%%%%%%
\begin{theorem}
    \label{thm:slln-restricted} %{{{
   Assume (A\ref{a:invariance})--(A\ref{a:adapt-lip}) hold 
   and let $f$ be a function
   with $\norm{f}_{V^\tau} < \infty$ for some $\tau\in(0,1-\iota)$. 
   Assume $\beta < \kappa_*^{-1} \min\{
   1/2, 1-\iota-\tau\}$
   and 
   $\sum_{k=1}^\infty k^{\kappa_*\beta-1} \eta_k < \infty$
   where $\kappa_*\ge 1$ is an
   independent constant.
   Then,
   \begin{equation}
       \frac{1}{n} \sum_{k=1}^n f(\tilde{X}_k) \xrightarrow{n\to\infty} 
       \int_{\spc{X}} f(x) \pi(x) \ud x
       \quad\text{almost surely.}
       \label{eq:slln-restricted}
   \end{equation}
\end{theorem}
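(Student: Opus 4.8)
The plan is to run the now-standard argument for strong laws in adaptive MCMC, based on the Poisson equation and a martingale decomposition in the spirit of \cite{andrieu-moulines,saksman-vihola}; indeed the truncated-chain framework and the statement are precisely those of \cite{saksman-vihola}, so the proof ultimately consists of matching notation and invoking the corresponding result there, and what follows is a sketch of the mechanism. Write $\pi(f)\defeq\int_{\spc{X}}f(x)\pi(x)\,\ud x$. For every $n$ and every $s\in K_n$, the uniform drift and minorisation (A\ref{a:uniform-drift-mino}) together with the polynomial bound $\max\{(1-\lambda_n)^{-1},\delta_n^{-1},b_n\}\le cn^\beta$ yield quantitative $V^\gamma$-uniform ergodicity of $P_s$, with a rate and a constant that deteriorate only polynomially in $n$; hence the Poisson equation $\hat g_s-P_s\hat g_s=f-\pi(f)$ has the resolvent solution $\hat g_s\defeq\sum_{j\ge0}P_s^j(f-\pi(f))$ with $\|\hat g_s\|_{V^\gamma}+\|P_s\hat g_s\|_{V^\gamma}\le cn^{\kappa_1\beta}\|f\|_{V^\gamma}$ for $s\in K_n$, and feeding the Lipschitz estimate (A\ref{a:kernel-lip}) into the same expansion gives $\|P_s\hat g_s-P_{s'}\hat g_{s'}\|_{V^\gamma}\le cn^{\kappa_2\beta}\|f\|_{V^\gamma}|s-s'|$ for $s,s'\in K_n$.

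Next, (A\ref{a:invariance}) gives, for $k\ge2$, the identity $f(\tilde X_k)-\pi(f)=D_k+\Psi_k(\tilde X_{k-1})-\Psi_k(\tilde X_k)$, where $\Psi_k\defeq P_{\tilde S_{k-1}}\hat g_{\tilde S_{k-1}}$ and $D_k\defeq\hat g_{\tilde S_{k-1}}(\tilde X_k)-\Psi_k(\tilde X_{k-1})$ satisfies $\Econd{D_k}{\F_{k-1}}=0$, since $\tilde S_{k-1}$ is $\F_{k-1}$-measurable and $\P(\tilde X_k\in\uarg\mid\F_{k-1})=P_{\tilde S_{k-1}}(\tilde X_{k-1},\uarg)$. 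Summing over $k$ and rearranging the telescoping part,
\[
    \sum_{k=2}^n\left(f(\tilde X_k)-\pi(f)\right)=M_n+\Psi_2(\tilde X_1)-\Psi_n(\tilde X_n)+\sum_{k=2}^{n-1}\left(P_{\tilde S_k}\hat g_{\tilde S_k}-P_{\tilde S_{k-1}}\hat g_{\tilde S_{k-1}}\right)(\tilde X_k),
\]
with $M_n\defeq\sum_{k=2}^n D_k$. It suffices to show that each of the three non-constant terms on the right, divided by $n$, tends to zero almost surely.

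The common ingredient is a polynomial $V$-moment bound for the truncated chain: iterating the drift in (A\ref{a:uniform-drift-mino}), and crucially using the lower bound $1-\lambda_n\ge c^{-1}n^{-\beta}$ to damp the additive $b_n$-terms, gives $\Eop{V(\tilde X_n)}\le cn^{m\beta}$ with an absolute constant $m$, hence $\Eop{V^r(\tilde X_n)}\le cn^{rm\beta}$ for $r\le1$ by Jensen. Fix $\kappa_*\ge1$ absolute and large enough that all exponents appearing below are dominated by $\kappa_*\beta$. For the martingale term take $p\defeq\min\{2,1/\gamma\}$, so that $p\gamma\le1$ and $1-1/p=\min\{1/2,1-\gamma\}$; then $\Eop{|D_k|^p}\le ck^{p\kappa_1\beta}\Eop{V^{p\gamma}(\tilde X_k)}\le ck^{p\kappa_*\beta}$, so the hypothesis $\beta<\kappa_*^{-1}\min\{1/2,1-\gamma\}$ gives $\sum_k k^{-p}\Eop{|D_k|^p}<\infty$; hence $\sum_k k^{-1}D_k$ converges a.s.\ (Chow's theorem; see \cite{hall-heyde}) and $M_n/n\to0$ a.s.\ by Kronecker's lemma. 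For the Lipschitz-remainder term, \eqref{eq:s-rec-trunc} and (A\ref{a:adapt-lip}) give $|\tilde S_k-\tilde S_{k-1}|\le\eta_k|H(\tilde S_{k-1},\tilde X_{k-1},\tilde Y_k)|\le c\eta_k k^\beta$, so its $k$-th summand is at most $ck^{(\kappa_2+1)\beta}\eta_k V^\gamma(\tilde X_k)$, whose expectation is $\le ck^{\kappa_*\beta}\eta_k$; thus $\Eop{\sum_k k^{-1}\cdot(\text{summand})}\le c\sum_k k^{\kappa_*\beta-1}\eta_k<\infty$ under the hypothesis $\sum_k k^{\kappa_*\beta-1}\eta_k<\infty$, so that sum is a.s.\ finite and Kronecker's lemma makes the average of the summands over $k\le n$ vanish. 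Finally, the boundary term $n^{-1}\Psi_n(\tilde X_n)$ is bounded by $cn^{\kappa_1\beta-1}V^\gamma(\tilde X_n)$, and $\Eop{n^{-1}V^\gamma(\tilde X_n)}\le cn^{m\gamma\beta-1}\to0$ since $m\gamma\beta<1$; the passage from this to almost-sure convergence is a (somewhat delicate) Borel--Cantelli argument along a geometric subsequence. Collecting the three estimates yields \eqref{eq:slln-restricted}.

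I expect the main obstacles to be, first, the bookkeeping of the polynomial exponents --- verifying that one absolute $\kappa_*\ge1$ can dominate simultaneously the growth of $\|\hat g_s\|_{V^\gamma}$ and of its $s$-Lipschitz modulus over $K_n$, the $V^{p\gamma}$-moment growth of the truncated chain, and the exponents in the Lipschitz-remainder and boundary bounds, and that with this $\kappa_*$ the two quantitative hypotheses $\beta<\kappa_*^{-1}\min\{1/2,1-\gamma\}$ and $\sum_k k^{\kappa_*\beta-1}\eta_k<\infty$ are exactly what close, respectively, the martingale/boundary and the Lipschitz-remainder estimates --- and, second, the two quantitative estimates this bookkeeping rests on: the polynomial-order moment bound $\Eop{V(\tilde X_n)}\le cn^{m\beta}$ (the naive $O(n^{1+\beta})$ bound would force $\gamma<1/2$, so the two-sided polynomial control of $1-\lambda_n$ must be exploited) and the almost-sure control of $n^{-1}V^\gamma(\tilde X_n)$. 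All of these are carried out in \cite{saksman-vihola}, so the proof reduces to checking that (A\ref{a:invariance})--(A\ref{a:adapt-lip}) match the hypotheses of the relevant theorem there and applying it.
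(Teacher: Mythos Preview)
Your proposal is correct and takes essentially the same approach as the paper: both reduce the claim to \cite[Theorem~2]{saksman-vihola}, with the paper's proof consisting of a single sentence noting that (A\ref{a:adapt-lip}) here is a slight simplification of the corresponding assumption there. Your sketch of the Poisson-equation/martingale mechanism underlying that reference is accurate and more detailed than what the paper provides, but the endpoint is the same.
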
 %}}}
\begin{proof} %{{{
    This theorem is a straightforward modification  of
    Theorem 2 in \cite{saksman-vihola}. In particular, the assumption
    (A\ref{a:adapt-lip}) here is only slightly more general 
    than assumption (A4) in
    \cite{saksman-vihola} and the changes required for the proof are obvious.
\end{proof}
%}}}

Now we are ready to give a proof to the 
first main result considering the case of compactly supported $\pi$.
%%%%%%%%%%%%%%%%%%%%
\begin{prf-compact-stable} %{{{
    Corollary \ref{cor:compact-stability} ensures
    that for any $\epsilon>0$, there are 
    $-\infty<a_1^{(\epsilon)}< a_2^{(\epsilon)} < \infty$ such that
    $\P(B^{(\epsilon)})\ge 1-\epsilon$, where
    \[
        B^{(\epsilon)}\defeq \{a_1^{(\epsilon)}\le S_n \le
        a_2^{(\epsilon)}\text{ for all $n\ge 1$}\}.
    \]
    Set $K_n^{(\epsilon)} \defeq K^{(\epsilon)}
    \defeq [a_1^{(\epsilon)}, a_2^{(\epsilon)}]\times\spc{S}_\zeta$ for all 
    $n\ge 1$, and
    construct the truncated process $(\tilde{X}_n^{(\epsilon)},\tilde{\Gamma}_n^{(\epsilon)})$ 
    using these restriction sets in \eqref{eq:s-rec-trunc}. 
    Define $\theta_1^{(\epsilon)} \defeq \phi(a_1^{(\epsilon)})>0$ and
    $\theta_2^{(\epsilon)} \defeq \phi(a_2^{(\epsilon)}) < \infty$.
    
    Let us next verify the above assumptions 
    (A\ref{a:invariance})--(A\ref{a:adapt-lip})
    with some $c\ge 1$, $\beta=0$ and $V\equiv 1$. 
    The assumption 
    (A\ref{a:invariance}) holds by construction of the process and the 
    Metropolis kernel.
    For (A\ref{a:uniform-drift-mino}), take $C_n \defeq \spc{X}$
    for all $n\ge 1$, and notice that $P_\gamma V(x) = 1$ for all $x\in\spc{X}$
    and $\gamma\in\spc{G}$.
    By Assumption \ref{a:proposal} one can estimate
    for all $\gamma\in K^{(\epsilon)}$ and all $x\in\spc{X}$, 
    \[
        \begin{split}
        P_\gamma(x,A) &\ge \int_A \alpha(x,y) q_\gamma(x-y)\ud y \\
        & \ge \left(\inf_{x,y\in\spc{X},\,\gamma\in K^{(\epsilon)}} 
        q_\gamma(x-y)\right)
        \int_A \frac{\pi(y)}{\sup_{z\in\spc{X}} \pi(z)} \ud y \\
        & \ge \theta_2^{-d}\zeta^{-1/2}
        \left(\inf_{|z|\le \mathop{\mathrm{diam}}(\spc{X})} 
            \hat{q}(\|\theta_1^{-1} \zeta^{1/2} z\|)\right)
        c_1 \nu_\gamma(A) \ge \delta \nu_\gamma(A)
        \end{split}
    \]
    with a $\delta>0$,
    where $\nu_\gamma(A)\defeq \nu(A) 
    \defeq c_1^{-1} \int_A \frac{\pi(y)}{\sup_{z\in\spc{X}} \pi(z)} \ud y$
    and $c_1>0$ chosen so that $\nu(\spc{X})=1$.
    Assumption \ref{a:scaling-function} ensures that
    the derivative of $\phi$ is bounded on $[a_1^{(\epsilon)},a_2^{(\epsilon)}]$
    and therefore we have
    \[
    \|\phi(s)\Sigma^{1/2}-\phi(s')\Sigma'^{1/2}\|
    \le \|\Sigma\|\cdot |\phi(s)-\phi(s')|
    + |\phi(s)| \cdot \|\Sigma-\Sigma'\|
    \le c_2 |\gamma-\gamma'|
    \]
    with some finite $c_2=c_2(\epsilon)$
    and 
    Proposition \ref{prop:kernel-lip} in Appendix \ref{sec:metropolis} implies
    (A\ref{a:kernel-lip}). Finally,
    it holds that $|H(\gamma,x,y)| \le c$ for all $\gamma\in K_n$ and
    $x,y\in\spc{X}$, implying 
    (A\ref{a:adapt-lip}).
    
    All (A\ref{a:invariance})--(A\ref{a:adapt-lip}) hold and
    $\sum_{k=1}^\infty k^{-1} \eta_k \le (\sum_{k=1}^\infty
    k^{-2})^{1/2}(\sum_{k=1}^\infty \eta_k^2)^{1/2} < \infty$ 
    by Assumption \ref{a:step-size}, so
    Theorem \ref{thm:slln-restricted} yields a strong law
    of large numbers for the truncated process
    $\tilde{X}_n^{(\epsilon)}$ in case of a bounded function $f$. 
    Since $(\tilde{X}_n^{(\epsilon)})_{n\ge 1}$
    coincides with
    the original process 
    $(X_n)_{n\ge 1}$ in $B^{(\epsilon)}$, the ergodic averages corresponding 
    $X_n(\omega)$ converge to $\int f(x) \pi(x) \ud x$ with almost every~$\omega\in B^{(\epsilon)}$. 
    Since $\epsilon>0$ was
    arbitrary, the strong law of large numbers 
    \eqref{eq:slln} holds almost surely.
\end{prf-compact-stable} %}}}

%%%%%%%%%%%%%%%%%%%%
\begin{remark} %{{{
    Theorem \ref{thm:slln-restricted} (Theorem 2 of
    \cite{saksman-vihola}) is a modification of Proposition 6 in
    \cite{andrieu-moulines}. Having 
    Corollary \ref{cor:compact-stability} ensuring the boundedness of the
    trajectories of $S_n$,
    Theorem \ref{thm:compact-stable} could be
    obtained also using other techniques, in particular, the mixingale
    approach described in \cite{saksman-am,atchade-rosenthal}, or the
    coupling technique of \cite{roberts-rosenthal} (resulting in a weak law
    of large numbers).
    These other techniques do not, however, apply directly to 
    Theorem \ref{thm:super-exp-ergodicity}, since in this case 
    the trajectories of $S_n$ are not necessarily bounded from above,
    but only satisfy the polynomial bound of Proposition
    \ref{prop:upper-polynomial}.
\end{remark} %}}}

%%%%%%%%%%%%%%%%%%%%
\begin{prf-super-exp-ergodicity} %{{{
    Proposition \ref{prop:upper-polynomial} ensures that 
    for any $\beta'>0$ there are
    a.s.~positive $\Theta_1$ and a.s.~finite $\Theta_2$ such that
    \begin{equation}
        \Theta_1 \le \phi(S_n) \le \Theta_2 n^{\beta'}.
        \label{eq:poly-est}
    \end{equation}
    Now, similarly as in the proof of Theorem \ref{thm:compact-stable}, 
    for any $\epsilon>0$, one can find $0<\theta_1^{(\epsilon)}\le
    \theta_2^{(\epsilon)} < \infty$ such that
    \begin{equation}
        \P(\forall n\ge 1: \theta_1^{(\epsilon)} \le \phi(S_n) \le
        \theta_2^{(\epsilon)}n^{\beta'})
        \ge 1-\epsilon
        \label{eq:p-good}
    \end{equation}
    and construct $(\tilde{X}_n^{(\epsilon)},\tilde{S}_n^{(\epsilon)})_{n\ge 1}$ using the 
    restriction sets $K_n^{(\epsilon)} \defeq [a_1^{(\epsilon)},
    a_2^{(n,\epsilon)}]$, where $\phi(a_1^{(\epsilon)}) =
    \theta_1^{(\epsilon)}$ and $\phi(a_2^{(\epsilon,n)}) =
    \theta_2^{(\epsilon)}n^{\beta'}$.
    
    Let $\xi\in(p,1)$ and let 
    $V(x)\defeq c_V \pi^{-\xi}(x)$ with $c_V \defeq \sup_x \pi^{\xi}(x)$.
    Assumption (A\ref{a:invariance}) 
    holds by construction and
    (A\ref{a:adapt-lip}) holds for any given $\iota\in(0,1-\xi)$ 
    as verified in
    the proof of Theorem 10 in \cite{saksman-vihola}, observing that
    $|H(x,y)|\le 1$.
    Proposition \ref{prop:geom-bound} in Appendix
    \ref{sec:metropolis} with the fact $\det(\theta \Sigma) =
    \theta^d \det(\Sigma)$ yields
    (A\ref{a:uniform-drift-mino}) with $\beta = d\beta'$.
    Assumption \ref{a:scaling-function} ensures that $\phi'(s)\le c_1
    \phi^\kappa(s)$ for all $s\in\R$, from which $|\phi(s)-\phi(s')|\le c_1(\theta_2^{(\epsilon)}
    n^{\beta'})^{\kappa}|s-s'|\le c_2 n^{\kappa\beta'} |s-s'|$ for all
    $s,s'\in [a_1^{(\epsilon)},a_2^{(n,\epsilon)}]$. Now,
    Proposition \ref{prop:kernel-lip} in Appendix \ref{sec:metropolis} shows 
    (A\ref{a:kernel-lip}) with $\beta = c_3\beta'$
    as in the proof of Theorem \ref{thm:compact-stable}.
    To conclude, the assumptions (A\ref{a:invariance})--(A\ref{a:adapt-lip}) hold
    with constants $(c,\beta)$, where
    $\beta=\beta(\epsilon,\beta')>0$
    can be selected to be arbitrarily small and $c=c(\epsilon,\beta)<\infty$.
    
    In particular, one can let $\beta<1/2\kappa_*^{-1}$, so that
    $\sum_{k=1}^\infty k^{\kappa_*\beta-1}\eta_k < \infty$ as in the
    proof of Theorem \ref{thm:compact-stable}. Take now
    $\tau=p/\xi\in(0,1)$ 
    so that
    $|f(x)|/V^\tau(x) = c_V^\tau |f(x)|\pi^p(x)$,
    implying that $\|f\|_{V^\tau}<\infty$.
    Theorem \ref{thm:slln-restricted} guarantees that the strong law
    of large numbers holds in the set \eqref{eq:p-good}, and a.s.~by letting
    $\epsilon\to 0$.
\end{prf-super-exp-ergodicity} %}}}

%}}}

%%%%%%%%%%%%%%%%%%%%%%%%%%%%%%%%%%%%%%%%%%%%%%%%%%%%%%%%%%%%%%%%%%%%%%%%%%%%%%%
\section*{Acknowledgements} %{{{

The author thanks Professor Eero Saksman for
comments significantly improving the presentation of the paper. 

%}}}

%%%%%%%%%%%%%%%%%%%%%%%%%%%%%%%%%%%%%%%%%%%%%%%%%%%%%%%%%%%%%%%%%%%%%%%%%%%%%%%
%%%%%%%%%%%%%%%%%%%%%%%%%%%%%%%%%%%%%%%%%%%%%%%%%%%%%%%%%%%%%%%%%%%%%%%%%%%%%%%

\appendix

%%%%%%%%%%%%%%%%%%%%%%%%%%%%%%%%%%%%%%%%%%%%%%%%%%%%%%%%%%%%%%%%%%%%%%%%%%%%%%%
\section{Proofs of geometric lemmas} 
\label{sec:half-space} %{{{

\begin{prf-lemma-proposal} %{{{
Let $\Sigma\in\spc{S}_\zeta$ with $\zeta\in[1,\infty)$, 
that is, the set of eigenvalues satisfy
$\lambda(\Sigma)\subset[\zeta^{-1},\zeta]$. Then
$\zeta^{-d}\le \det(\Sigma)\le \zeta^d$ and
the claim \eqref{eq:proposal-sup} follows by
\[
    \sup_{\Sigma\in\spc{S}_\zeta,\, z\in\R^d}
    q_{(s,\Sigma)}(z)
    \le [\phi(s)]^{-d} \zeta^{d/2}
    \sup_{z\in\R^d} q(z).
\]
Observe then that for any constant $M>0$ one has
\[
        \int_{\ball(0,\phi(s)M)} 
        [\phi(s)]^{-d}\det(\Sigma)^{-1/2}q([\phi(s)]^{-1}\Sigma^{-1/2}z) \ud z
        \ge \int_{\ball(0,\zeta^{-1/2}M)} q(u) \ud u
\]
since $u\in\ball(0,\xi^{-1/2} M)$ implies that $[\phi(s)] \Sigma^{1/2}
u\in\ball(0,\phi(s) M)$.
Clearly $M$ can be chosen sufficiently large so that 
\eqref{eq:proposal-covering} holds.

Let then $P\subset\R^d$ be a plane, and let $z\in\R^d$ such that 
$d(z,P)\le \phi(s) M^{-1}$. Denote by $z^*$ the orthogonal projection of $z$
to $P$, whence $\|z^*-z\|\le \phi(s) M^{-1}$. Denote then 
$u = [\phi(s)]^{-1} \Sigma^{-1/2} z$ and 
$u^* = [\phi(s)]^{-1} \Sigma^{-1/2} z^*$.
We obtain that 
\[
    \|u-u^*\| \le [\phi(s)]^{-1} \zeta^{1/2} \| z - z^*\|
    \le \zeta^{1/2} M^{-1}.
\]
Having this estimate, we can estimate
\[
        \int_{\{d(z,P)\le \phi(s) M^{-1}\}}
        q_{(s,\Sigma)}(z) \ud z
        \le
        \int_{\{d(u,\tilde{P})\le \zeta^{1/2} M^{-1}\}}
          q(u) \ud u
\]
where $\tilde{P} = [\phi(s)]^{-1}\Sigma^{-1/2} P$ is a plane.
To conclude, we may choose $M$ sufficiently large so that
\eqref{eq:proposal-plane} and
\eqref{eq:proposal-covering} hold.
\end{prf-lemma-proposal} %}}}

\begin{prf-half-space} %{{{
    Fix an $\epsilon'>0$.
    By the uniform smoothness of $\{\partial A_i\}_{i\in I}$, one can
    find $\delta>0$ so that $\|n_i(y)- n_i(z)\| \le \epsilon'$
    for all $i\in I$ and $y,z\in\partial A_i$ with $\|y-z\|\le 2\delta$.
    
    Fix an $i\in I$, an $x\in A_i$ and a $r\in[0,\delta]$.
    If $\ball(x,r)\setminus A_i = \emptyset$, one can let $T$ be
    any half-space passing through $x$. Suppose for the rest of the proof
    that $\ball(x,r)\setminus A_i
    \neq \emptyset$ and let $y\in\ball(x,r)\cap \partial A_i$.
    Consider the open cones
    \begin{equation*}
        \begin{split}
        C_-&\defeq \{y+z:n_i(y)\cdot z <
            -\epsilon'\|z\|\} \\
        C_+&\defeq \{y+z:n_i(y)\cdot z >
            \epsilon'\|z\|\} \\
    \end{split}
    \end{equation*}
    illustrated in Figure \ref{fig:half-space-approx}.
    \begin{figure} %{{{
        \psfragscanon
        \psfrag{A}{$A_i$} 
        \psfrag{y}{$y$}
        \psfrag{n}{$n_i(y)$}
        \psfrag{Cm}{$C_-$}
        \psfrag{Cp}{$C_+$}
        \psfrag{B}{$\ball(y,2\delta)$}
        \psfrag{b}{$\ball(x,\delta)$}
        \begin{center}
        \includegraphics{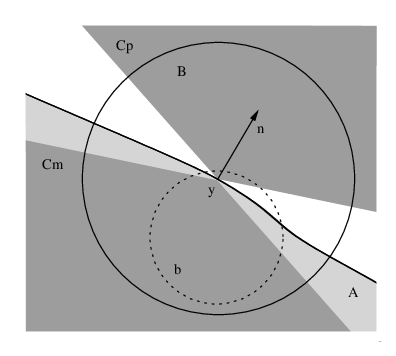}
        \end{center}
        \caption{Illustration of the half-space approximation. The set
          $A_i$ is shown in light grey, and the cones $C_-$ and $C_+$
          in dark grey.} %}}}
        \label{fig:half-space-approx}
    \end{figure}
    We shall verify that
    $\ball(y,2\delta) \cap C_- \subset \ball(y,2\delta) \cap A_i$
    and $\ball(y,2\delta) \cap C_+\subset \ball(y,2\delta) \setminus A_i$.

    Namely, let $u\in  \ball(y,2\delta)\cap C_-$ and write $u = y+z$. 
    Suppose that
    $u\notin A_i$ and define $t_0 \defeq \inf\{t\in[0,1]:y+tz\notin
      A_i\}$.
    Let
    $u_0\defeq y+t_0z$ and notice that
    $u_0\in\ball(y,2\delta)\cap\partial A_i$. Moreover, the line 
    segment $y+tz$
    with $t\in[0,1]$ passes through $\partial A_i$ at $u_0$ and therefore
     $n_i(u_0)\cdot z \ge 0$, since $n_i$ is the outer-pointing normal of
     $A_i$. On the other hand,
     \[
         \begin{split}
         n_i(u_0)\cdot \frac{z}{\|z\|} 
         &= (n_i(u_0) - n_i(y))\cdot \frac{z}{\|z\|} 
         + n_i(y)\cdot \frac{z}{\|z\|} \\
         &< \|n_i(u_0) - n_i(y)\| -\epsilon' < 0,
         \end{split}
     \]
     which is a contradiction, implying
    $C_-\cap \ball(y,2\delta) \subset A_i \cap
    \ball(y,2\delta)$. The case with $C_+$ is verified similarly.
    
    Let us define the half-space 
    $T \defeq \{y - 2\epsilon'r n_i(y) + z: z\cdot n_i(y) < 0\}$. 
    It holds that $\ball(y,2r)\cap T \subset \ball(y,2r)\cap C_-$
    since taking $y + w \in \ball(y,2r)\cap T$ one has
    $n_i(y)\cdot w < -2\epsilon'r \le -\epsilon'\|w\|$.
    On the other hand, 
    $\ball(y,2r)\cap C_- \subset \ball(y,2r) \cap A_i$
    and $\ball(x,r)\subset
    \ball(y,2r)$, so $\ball(x,r)\cap T \subset \ball(x,r)\cap A_i$.
    Clearly, $d(y,T) = 2\epsilon'r$, and
    since $x\notin C_+$ one has $n_i(y)\cdot (x-y) \le
    \epsilon'\|x-y\|\le \epsilon' r$. To conclude, 
    $d(x,T) \le 3\epsilon'r$, and taking $\epsilon' = \epsilon/3$
    yields the claim.
\end{prf-half-space} %}}}

%}}}

%%%%%%%%%%%%%%%%%%%%%%%%%%%%%%%%%%%%%%%%%%%%%%%%%%%%%%%%%%%%%%%%%%%%%%%%%%%%%%%
\section{Simultaneous properties for Metropolis kernels} 
\label{sec:metropolis} %{{{

We shall consider here the following general assumption on the proposal
densities.
%%%%%%%%%%%%%%%%%%%%
\begin{assumption} %{{{
    \label{a:cov-proposal}
    Let $\mathcal{C}_d \subset\R^{d\times d}$ stand for the symmetric and
    positive definite matrices. Suppose $\mathcal{P}\subset\mathcal{C}_d$
    and $\{q_R\}_{R\in\mathcal{P}}$ is a family of probability densities
    defined through
\begin{equation}
    q_R(z) \defeq |\det(R)|^{-1} \hat{q}(\|R^{-1} z\|), 
    \label{eq:cov-prop}
\end{equation}
where $\hat{q}:[0,\infty)\to(0,\infty)$ is a
bounded, decreasing, and differentiable function, satisfying the conditions
in Assumption \ref{a:proposal}.
Moreover, suppose that there is a constant $\kappa>0$ such that
all the eigenvalues of each $R\in\mathcal{P}$ are bounded from below 
by $\kappa$.
\end{assumption}
%}}}

%%%%%%%%%%%%%%%%%%%%
\begin{proposition} 
    \label{prop:geom-bound} %{{{
    Suppose $\pi$ satisfies Assumption \ref{a:super-exp} and
    the family $\{q_R\}_{R\in\mathcal{P}}$ 
    satisfies Assumption \ref{a:cov-proposal} with some $\kappa>0$ 
    and $\beta\in(0,1)$.
    Let $P_R$ be the
    Metropolis transition probability defined in
    \eqref{eq:metropolis-kernel} and using the proposal density $q_R$.
    Then, there exists a compact set $C\subset\R^d$, a probability measure
    $\nu$ on $C$ and a constant $b\in[0,\infty)$ such that 
    for all $R\in\mathcal{P}$, $x\in\R^d$ and
    measurable $A\subset\R^d$,
    \begin{eqnarray}
    P_R V(x) &\le& \lambda_R V(x)  + b \charfun{C}(x) \label{eq:drift-ineq-m} \\
    P_R(x,A) &\ge& \delta_R \charfun{C}(x) \nu(A)  \label{eq:mino-ineq-m}
    \end{eqnarray}
    where
    $V(x)\defeq c_V \pi^{-\beta}(x)\ge 1$ with 
    $c_V \defeq \sup_x \pi^{\beta}(x)$ and the constants
    $\lambda_R,\delta_R\in(0,1)$ satisfy the bound
    \begin{equation*}
        \max\{(1-\lambda_R)^{-1}, \delta_R^{-1}\} \le c
        |\det(R)|^{-1}
    \end{equation*}
    for some constant $c\ge 1$.
\end{proposition}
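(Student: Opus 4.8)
The plan is to establish the minorisation \eqref{eq:mino-ineq-m} and the geometric drift \eqref{eq:drift-ineq-m} separately, using a single compact set $C=\ball(0,R)$ and a single probability measure $\nu$ valid for every $s\in\mathcal{P}$; the radius $R$ would be fixed while proving the drift. Throughout, the determinant enters only through two elementary facts: the eigenvalues of $s$ being at least $\kappa$ give $\|s^{-1}z\|\le\kappa^{-1}\|z\|$, whence $q_s(z)=|\det(s)|^{-1}\hat q(\|s^{-1}z\|)\ge|\det(s)|^{-1}\hat q(\kappa^{-1}\|z\|)$ since $\hat q$ is decreasing; and $\sup_z q_s(z)=|\det(s)|^{-1}\hat q(0)$.

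The minorisation and the boundedness of $b$ are routine once $R$ is fixed. For $x,y\in C$ one has $\|x-y\|\le 2R$, so $q_s(x-y)\ge|\det(s)|^{-1}\hat q(2R/\kappa)>0$, and $\alpha(x,y)\ge\pi(y)/\sup_z\pi(z)=\pi(y)/c_V^2$; hence $P_s(x,A)\ge\int_{A\cap C}\alpha(x,y)q_s(x-y)\,\ud y\ge|\det(s)|^{-1}\hat q(2R/\kappa)c_V^{-2}\int_{A\cap C}\pi(y)\,\ud y$, which has the form $\delta_s\charfun{C}(x)\nu(A)$ with $\nu(A)\defeq(\int_C\pi)^{-1}\int_{A\cap C}\pi(y)\,\ud y$ and $\delta_s$ a fixed positive multiple of $|\det(s)|^{-1}$ (truncated at $1$). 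With $V\defeq c_V\pi^{-1/2}\ge 1$, the pointwise estimate $\alpha(x,y)V(y)\le\max\{V(x),c_V^2/\pi(x)\}=V(x)^2$ gives $P_sV(x)\le V(x)^2+V(x)$, which is finite and bounded over $x\in C$ (where $\pi$ is bounded away from zero) uniformly in $s$; this bound serves as $b$.

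The heart is the drift bound for $\|x\|>R$. I would start from the identity, obtained by splitting the proposal $y=x+z$ according to whether it raises $\pi$,
\begin{multline*}
    1-\frac{P_sV(x)}{V(x)} = \int_{L-x}q_s(z)\Bigl(1-\sqrt{\tfrac{\pi(x)}{\pi(x+z)}}\Bigr)\,\ud z \\
    {}- \int_{(L-x)^{c}}q_s(z)\Bigl(\sqrt{\tfrac{\pi(x+z)}{\pi(x)}}-\tfrac{\pi(x+z)}{\pi(x)}\Bigr)\,\ud z ,
\end{multline*}
where $L=L_{\pi(x)}=\{\pi\ge\pi(x)\}$, and show that the first (nonnegative) integral dominates the second by a quantitative margin. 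Two regimes have to be reconciled. If $|\det(s)|$ is small compared with $\|x\|^{d}$, the proposal is not too diffuse and one is essentially in the situation of Jarner and Hansen \cite{jarner-hansen} for geometrically ergodic random-walk Metropolis on super-exponential targets: by symmetry of $q_s$ about half of the proposal mass lands in the inward half-space $\{z\cdot\nabla\pi(x)>0\}$; by \eqref{eq:reg-contour} and a local half-space approximation of the contour through $x$ essentially all of that mass falls in $L$; and by \eqref{eq:super-exp} an inward step of fixed length changes $\log\pi$ by an amount of order $\|x\|^{\rho-1}$, so for $R$ large the first integral exceeds a fixed positive constant, while the second is $o(1)$ because \eqref{eq:super-exp} makes the shell $\{z\colon\pi(x)/4\le\pi(x+z)<\pi(x)\}$ thin and $\sqrt t-t\le\sqrt t$ is small on the deep tail. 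If instead $|\det(s)|$ is large, the proposal mostly overshoots the bounded set $L$; then the integral over $(L-x)^{c}$ is controlled using $\sup_z q_s(z)=|\det(s)|^{-1}\hat q(0)$, the integrability of $\pi^{1/2}$, and the thinness of the near-tail shell outside $L$, whereas the first integral is bounded below by a positive multiple of $q_s(L-x)$, itself $\gtrsim|\det(s)|^{-1}\mathrm{vol}(L)\gtrsim|\det(s)|^{-1}R^{d}$, because $\pi\gg\pi(x)$ on the bulk of $L$ so that $1-\sqrt{\pi(x)/\pi(x+z)}$ is bounded below there. Combining the regimes yields $P_sV(x)\le\lambda_s V(x)$ for $\|x\|>R$ and all $s\in\mathcal{P}$, with $1-\lambda_s$ bounded below by a positive multiple of $\min\{1,|\det(s)|^{-1}\}$, which is the stated bound on $\lambda_s$.

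The obstacle I expect is exactly this uniform-in-$s$ drift estimate: pushing the Jarner--Hansen contraction through the whole family $\mathcal{P}$, which is neither compact nor isotropic, while keeping $C$ and $b$ free of $s$ and tracking how the contraction gap deteriorates as the proposal becomes very diffuse. The diffuse regime is where the argument genuinely departs from the fixed-scale theory: the drift must be squeezed out of the small probability of landing in the high-density core $L$, and one has to verify that this outweighs the competing contribution of the many proposals into the deep tail, which inflate $V$ but are quenched by the super-exponential decay of $\pi$.
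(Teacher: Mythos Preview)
Your minorisation and the bound on $b$ over $C$ are fine, and the identity you write for $1-P_sV(x)/V(x)$ is correct and is indeed the natural starting point. The gap is in the drift argument, precisely at the difficulty you flag at the end: the two-regime split on $|\det(s)|$ versus $\|x\|^{d}$ does not cope with the \emph{anisotropy} of $s\in\mathcal P$. In your diffuse regime you claim $\int_{L-x}q_s(z)\,\ud z\gtrsim|\det(s)|^{-1}\mathrm{vol}(L)$, but $L=L_{\pi(x)}\subset\ball(0,M\|x\|)$ lies at distance of order $\|x\|$ from $x$, and the only pointwise lower bound available is $q_s(z)\ge|\det(s)|^{-1}\hat q(\kappa^{-1}\|z\|)$, which carries an extra factor $\hat q(c\|x\|)\to0$. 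The assumption $|\det(s)|\gg\|x\|^{d}$ would rescue this for \emph{isotropic} $s=\theta I$ (then $\|s^{-1}z\|\sim|\det(s)|^{-1/d}\|x\|\ll1$), but if one eigenvalue of $s$ is huge and the rest equal $\kappa$, one still has $\|s^{-1}z\|\sim\kappa^{-1}\|x\|$ in most directions and the bound collapses. The same anisotropy breaks the non-diffuse regime: Jarner--Hansen needs the proposal concentrated at a scale where the half-space approximation of $\partial L$ is valid, and an $s$ with $|\det(s)|\lesssim\|x\|^{d}$ but one eigenvalue $\sim\|x\|^{d}/\kappa^{d-1}$ is not. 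Notice also that you never invoke the derivative hypotheses on $\hat q$ in Assumption~\ref{a:proposal}, which are part of Assumption~\ref{a:cov-proposal}; this is a sign that a key tool is missing.

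The device the paper uses---and the reason those derivative conditions are in the hypotheses---is a layer-cake reduction to the isotropic case. By Fubini and the substitution $t=\hat q(u)$,
\[
\int_{\R^d} f(x+z)\,q_s(z)\,\ud z
= -|\det(s)|^{-1}\int_0^\infty\hat q'(u)\int_{E_u}f(y)\,\ud y\,\ud u,
\qquad E_u\defeq\{x+z:\|s^{-1}z\|\le u\},
\]
and since every eigenvalue of $s$ exceeds $\kappa$ one has $E_u\supset\ball(x,\kappa u)$. This recasts the anisotropic proposal as a $(-\hat q')$-weighted mixture over \emph{balls} of radius $\kappa u$, handing the anisotropy to the inclusion and leaving a one-parameter family of isotropic problems. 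The technical bounds on $\hat q'(x)-2\hat q'(x+\epsilon)$ in Assumption~\ref{a:proposal} are exactly what one needs to push the good/bad comparison through the mixture; they play for general $\hat q$ the role that \cite[Lemma~17]{saksman-vihola} plays in the Gaussian case $\hat q(x)=e^{-x^2/2}$. With this reduction the proof of \cite[Proposition~18]{saksman-vihola} carries over verbatim and produces the $|\det(s)|$-dependence of $\lambda_s$ and $\delta_s$ automatically, with no separate diffuse/non-diffuse dichotomy.
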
 %}}}
\begin{proof} %{{{
    Proposition \ref{prop:geom-bound} is a generalisation of 
    \cite[Proposition 15]{saksman-vihola} considering 
    Gaussian densities $q_{R}$ and the case $\beta=1/2$. 
    We shall describe the changes 
    in the proof of \cite[Proposition
    15]{saksman-vihola} required for the class of proposal distributions in
    Assumption \ref{a:cov-proposal}.
    
    First, observe that with $V(x)=c_V \pi^{-\beta}(x)$ one has
    \[\begin{split}
        1-\frac{P_R V(x)}{V(x)}
        &= \int_{A_x} \left[1-\left(\frac{\pi(x)}{\pi(y)}\right)^{\beta}\right]
        q_R(y-x)\ud y \\
        &\phantom{=}- \int_{R_x} \left(\frac{\pi(y)}{\pi(x)}\right)^{1-\beta}
        \left[1-\left(\frac{\pi(y)}{\pi(x)}\right)^\beta\right] 
            q_R(y-x)\ud y.
    \end{split}\]
    The $1/4$ in the estimate (37) of \cite{saksman-vihola}
    is replaced with $c_*=\sup_{u\in[0,1]} u^{1-\beta}(1-u^\beta)\in(0,1)$.
    One can easily make
    $1-\big(\pi(x)/\pi(y)\big)^\beta>c^*$ for all $y\in\tilde{A}_x$,
    where $c^*$ is any chosen value in $(c_*,1)$.
    
    For a non-negative function
    $f$,  one can write 
    by Fubini's theorem
    \[
        \begin{split}
        \int_{\R^d} f(z+x) q_R(z) \ud z &= 
        |\det(R)|^{-1} \int_0^{\hat{q}(0)} 
          \int_{\{\hat{q}(\|[R^{-1} z\|) \ge t\}} f(z+x)\ud z 
        \ud t
        \\
        &= -
         |\det(R)|^{-1} \int_0^\infty 
          \int_{E_u} f(y) \ud y \hat{q}'(u) \ud u
        \end{split}
    \]
    where the substitution $t = \hat{q}(u)$ was used,
    and $E_u \defeq \{x+z:\|R^{-1} z\|\le u\}$.
    One has
    $\|R^{-1} z\| \le \kappa^{-1} \|z\|$, and
    thus $E_u \supset \ball(x,u\kappa)$.
    
    The conditions in Assumption \ref{a:proposal} for
    the derivative $\hat{q}'$ correspond to the estimate obtained in 
    \cite[Lemma 14]{saksman-vihola}
    for a Gaussian family, that is, $\hat{q} = e^{-x^2/2}$ and the case 
    $\xi=1/2$. In the present case, the choice $\xi=c_*/c^*$ 
    is used.
    These facts are enough to complete the proof of
    \cite[Proposition
    15]{saksman-vihola} to yield the claim.
\end{proof}
%}}}

%%%%%%%%%%%%%%%%%%%%
\begin{proposition}
    \label{prop:kernel-lip} %{{{
    Suppose the family $\{q_R\}_{R\in\mathcal{P}}$
    satisfies Assumption \ref{a:cov-proposal} with some $\kappa>0$.
    Suppose, in addition, that either
\begin{enumerate}[(i)]
\item \label{item:v1} $V \equiv 1$ or
\item \label{item:v2} $\pi$ satisfies Assumption \ref{a:super-exp} and
  $\beta\in(0,1)$, 
$V(x)\defeq c_V \pi^{-\beta}(x)\ge 1$ with 
$c_V \defeq \sup_x \pi^{\beta}(x)$.
\end{enumerate}
Then, there is a constant $c>0$ such that
for the Metropolis transition probability $P_R$ given
in \eqref{eq:metropolis-kernel}, it holds that
\begin{equation}
    \norm{P_R f - P_{R'} f }_{V^r} \le c
    \max\{\|R\|,\|R'\|\}^{d+1} \|f\|_{V^r} \|R-R'\|
    \label{eq:kernel-lip}
\end{equation}
for all $R,R'\in\mathcal{P}$ and $r\in[0,1]$.
The matrix norm above is the Frobenius norm defined as $\|R\| \defeq
\sqrt{\tr(R^T R)}$.
\end{proposition}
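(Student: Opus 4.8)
The plan is to estimate the difference $P_s f - P_{s'}f$ by splitting the Metropolis kernel \eqref{eq:metropolis-kernel} into its two natural pieces: the ``rejection'' part $\charfun{A}(x)\int[1-\alpha(x,y)]q_s(x-y)\ud y$ and the ``acceptance'' part $\int_A \alpha(x,y)q_s(x-y)\ud y$. Writing both $P_s$ and $P_{s'}$ this way, one gets
\[
    P_s f(x) - P_{s'} f(x) = \int_{\R^d} \big[f(y)-f(x)\big]\,\alpha(x,y)\,
    \big[q_s(x-y) - q_{s'}(x-y)\big]\,\ud y,
\]
since the $f(x)$ terms from the rejection parts combine with the missing mass. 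So the whole problem reduces to controlling $\int |f(y)-f(x)|\,|q_s(z) - q_{s'}(z)|\,\ud y$ with $z = x-y$, and then dividing by $V^r(x)$.

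The first step is to bound $|q_s(z) - q_{s'}(z)|$ pointwise in terms of $\|s - s'\|$. Using the representation \eqref{eq:cov-prop}, one has $q_s(z) = |\det s|^{-1}\hat q(\|s^{-1}z\|)$, and I would differentiate the map $s\mapsto q_s(z)$ (it is smooth on $\mathcal{P}$, an open-ish subset of $\mathcal{C}_d$) to obtain a gradient bound of the form $|q_s(z)-q_{s'}(z)| \le c\,\max\{\|s\|,\|s'\|\}^{c_2}\,g(z)\,\|s-s'\|$ for a suitable integrable (and, in case \eqref{item:v2}, $V^r$-integrable) envelope $g$. The derivative of $|\det s|^{-1}$ contributes factors of $\|s^{-1}\|$, which are bounded since eigenvalues of $s$ are $\ge\kappa$; the derivative of $\hat q(\|s^{-1}z\|)$ contributes $\hat q'$ composed with a rescaling of $z$ times $\|s^{-1}\|^2\|z\|$, and $\hat q$ bounded decreasing differentiable gives enough decay. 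One should be a little careful to produce an envelope $g$ that works uniformly over the segment between $s$ and $s'$, but since $\mathcal{P}$ need not be convex I would instead just estimate $|q_s(z) - q_{s'}(z)|$ directly (e.g.\ via $q_s(z)-q_{s'}(z) = \int_0^1 \nabla_s q_{s(t)}(z)\cdot(s-s')\ud t$ along the straight segment, noting eigenvalue lower bounds and norm upper bounds are preserved enough along it, or by a telescoping/Lipschitz argument), absorbing everything into the stated $\max\{\|s\|,\|s'\|\}^{c_2}$ factor.

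The second step handles the $f$-dependence. In case \eqref{item:v1}, $V\equiv1$, so I just need $\int |f(y)-f(x)|\,|q_s(z)-q_{s'}(z)|\,\ud y \le 2\|f\|_\infty \int |q_s(z)-q_{s'}(z)|\,\ud z$, which is $\le c\max\{\|s\|,\|s'\|\}^{c_2}\|f\|_\infty\|s-s'\|$ by the first step (integrating the envelope $g$). In case \eqref{item:v2}, I use $|f(y)| \le \|f\|_{V^r} V^r(y)$ and $|f(x)|\le\|f\|_{V^r}V^r(x)$, so I need $\int V^r(y)\,|q_s(z)-q_{s'}(z)|\,\ud y \le c\max\{\ldots\}^{c_2}\,V^r(x)\,\|s-s'\|$; since $V^r(x+z) = c_V^r\pi^{-r/2}(x+z)$ and $\pi$ has super-exponentially decaying tails, $\pi^{-r/2}(x+z)/\pi^{-r/2}(x) = (\pi(x)/\pi(x+z))^{r/2}$ can be controlled by a factor growing subexponentially in $\|z\|$ (this is exactly the kind of estimate used to get the drift condition in Proposition \ref{prop:geom-bound}, cf.\ \cite[Proposition 18]{saksman-vihola}), which is integrated against the rapidly decaying envelope $g$. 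Combining the two steps and dividing by $V^r(x)$ gives \eqref{eq:kernel-lip}.

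The main obstacle is the uniformity of the pointwise bound on $|q_s(z) - q_{s'}(z)|$: one must produce a single integrable envelope $g(z)$ and a single exponent $c_2$ that work for \emph{all} $s,s'\in\mathcal{P}$ simultaneously, which is where the eigenvalue lower bound $\kappa$ in Assumption \ref{a:cov-proposal} and the structural conditions on $\hat q'$ from Assumption \ref{a:proposal} are essential; without the lower eigenvalue bound the Jacobian factors $\|s^{-1}\|$ would be unbounded and no such envelope exists. The rest is routine calculus and integration, and the case \eqref{item:v2} tail bookkeeping is a direct analogue of estimates already carried out in \cite{saksman-vihola}.
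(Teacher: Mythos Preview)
Your overall strategy matches the paper's: write $P_s f(x)-P_{s'}f(x)$ as an integral against $q_s-q_{s'}$, then control $\int|q_s-q_{s'}|$ by differentiating along the segment $s_t=s'+t(s-s')$ and bounding the derivative via $\|s^{-1}\|$, $\hat q$, $\hat q'$ and polar integration. (The segment stays in the set of matrices with eigenvalues $\ge\kappa$ because that set is convex, so your worry about $\mathcal{P}$ not being convex is not an issue.)

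There is, however, a genuine gap in your treatment of case~(ii). After writing
\[
P_s f(x)-P_{s'}f(x)=\int[f(y)-f(x)]\,\alpha(x,y)\,[q_s(x-y)-q_{s'}(x-y)]\,\ud y,
\]
you drop the factor $\alpha(x,y)$ and claim it suffices to bound $\int V^r(y)\,|q_s(z)-q_{s'}(z)|\,\ud y$ by $V^r(x)$ times something, arguing that $(\pi(x)/\pi(x+z))^{r/2}$ ``can be controlled by a factor growing subexponentially in $\|z\|$''. That claim is false for super-exponential targets: already for $\pi(x)=e^{-\|x\|^2}$ one has $\pi(x)/\pi(x+z)=e^{2x\cdot z+\|z\|^2}$, which is unbounded in $x$ for fixed $z$. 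No envelope $g(z)$ can absorb this.

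The fix is precisely to keep the $\alpha(x,y)$ you discarded. With $V=c_V\pi^{-1/2}$ and $r\in[0,1]$ one checks directly that $\alpha(x,y)V^r(y)\le V^r(x)$ for all $x,y$: if $\pi(y)\ge\pi(x)$ then $\alpha=1$ and $V^r(y)\le V^r(x)$; if $\pi(y)<\pi(x)$ then $\alpha(x,y)V^r(y)=c_V^r\pi(y)^{1-r/2}/\pi(x)\le c_V^r\pi(x)^{-r/2}=V^r(x)$ since $1-r/2>0$. This gives
\[
\|P_s f-P_{s'}f\|_{V^r}\le 2\|f\|_{V^r}\int_{\R^d}|q_s(z)-q_{s'}(z)|\,\ud z,
\]
which is exactly what the paper obtains by invoking \cite[Proposition~12]{andrieu-moulines}, and the rest of your argument (bounding the total variation of the proposals) goes through as you describe.
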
 %}}}
\begin{proof} %{{{
Consider first (\ref{item:v1}). From the definition of
the Metropolis kernel \eqref{eq:metropolis-kernel}, one obtains
\[
    \sup_x | P_R f(x) - P_{R'} f(x)|
    \le 2 \sup_x |f(x)| \int_{\spc{X}} |q_R(x)-q_{R'}(x)| \ud x.
\]
For (\ref{item:v2}),
Proposition 12 of \cite{andrieu-moulines} shows that
for any $r\in[0,1]$
it holds that
\[
    \norm{P_R f - P_{R'} f }_{V^r} 
    \le 2 \norm{f}_{V^r} \int_{\R^d}
    |q_R(x)-q_{R'}(x)| \ud x
\]
so it is sufficient to consider only the total variation of the proposal
distributions.

As in \cite{saksman-am} and \cite{andrieu-moulines}, one can write
\[
    \int_{\spc{X}} |q_R(x)-q_{R'}(x)| \ud x
    = \int_{\spc{X}} \left|\int_0^1 \frac{\ud}{\ud t}q_{R_t}(x) \ud t \right| \ud x
\]
where $R_t \defeq R' + t(R-R')$.
Let us compute
\[
    \frac{\ud}{\ud t}q_{R_t}(x)
    = |\det(R_t)|^{-1}\left(
    - \tr\big(R_t^{-1}(R-R')\big)q_{R_t}(x)
      + \hat{q}'(\|R_t^{-1} x\|) \frac{\ud}{\ud
        t}\|R_t^{-1} x\|\right)
\]
and
\[
    \frac{\ud}{\ud
        t}\|R_t^{-1} x\|
    = - \left(\frac{R_t^{-1} x}{\|R_t^{-1} x\|}\right)^T
    R_t^{-1}(R-R')R_t^{-1} x.
\]
Since $R-R'$ and $R_t^{-1}$ are symmetric and 
$R_t^{-1}$ positive definite, it holds 
that $|\tr\big(R_t^{-1}(R-R')\big)|\le \tr(R_t^{-1}) \max_{1\le i\le d}
|\lambda_i|
\le \tr(R_t^{-1}) \|R-R'\|$ where $\lambda_i$ are the eigenvalues of
$R-R'$ \cite[see, e.g, ][]{wang-matrix-trace-ineq}.
Since the Frobenius norm is sub-multiplicative,
\[
    \begin{split}
    \int_{\spc{X}} &|q_R(x)-q_{R'}(x)| \ud x \\
    &\le \sup_{t\in[0,1]} |\det(R_t)|^{-1}\left(\tr(R_t^{-1}) 
    +  \|R_t^{-1}\|^2
    \int_{\spc{X}} \|x\| \left|\hat{q}'(\|R_t^{-1}x\|)\right|\ud x  
    \right)\|R-R'\| \\
    &\le  \kappa^{-d}\left( d\kappa^{-1}
    + d\kappa^{-2} c_d \sup_{\|u\|=1,\,t\in[0,1]} \int_0^\infty r^{d} |
    \hat{q}'(r\|R_t^{-1}u\|)| \ud r 
    \right)\|R-R'\| 
    \end{split}
\]
by polar integration. Denote $\lambda = \lambda(u,t) \defeq \|R_t^{-1} u\|$, and observe that
since $\hat{q}$ is decreasing, integration by parts yields
\[
    \begin{split}
    \int_0^M r^{d} |
    \hat{q}'(\lambda r)| \ud r 
    &= \frac{d}{\lambda} \int_0^M r^{d-1} \hat{q}(\lambda r) \ud r
    - M^d \frac{\hat{q}(\lambda M)}{\lambda}  \\
    &\le\frac{d}{\lambda^{d+1}} \int_0^\infty u^{d-1} \hat{q}(u) \ud u
    = \frac{dc_{\hat{q}}}{\lambda^{d+1}}
    \end{split}
\]
for all $M>0$.
Since $\lambda^{-1}$ is smaller, for any $\|u\|=1$ and $t\in[0,1]$, than the 
maximum eigenvalue of $R$ and $R'$,
which is smaller than $\max\{\|R\|,\|R'\|\}$, we obtain
\[
    \int_{\R^d} |q_R(x)-q_{R'}(x)| \ud x 
    \le c_1 \max\{\|R\|,\|R'\|\}^{d+1} \|R-R'\|
\]
concluding the proof with $c=2c_1$.
\end{proof} %}}}

%%%%%%%%%%%%%%%%%%%%
\begin{proposition}
\label{prop:proposal-examples} %{{{
Suppose the proposal density $q$ is 
given as $q(z) = c\tilde{q}(\| z\|)$ where $c>0$ is a constant and 
\begin{enumerate}[(i)]
    \item \label{item:q0-exp2} $\tilde{q}(x) = e^{-x^2/2}$, or
    \item \label{item:q0-poly} $\tilde{q}(x) = (1+x^2)^{-d/2-p}$ for some
      $p>0$.
\end{enumerate}
That is, $q$ is a (multivariate) Gaussian or Student distribution, respectively.
Then, $q$ satisfies Assumption \ref{a:proposal}.
\end{proposition} %}}}
\begin{proof} %{{{
It is sufficient to verify that the derivative of 
$\tilde{q}$ satisfies the conditions in
Assumption \ref{a:proposal}.
Fix $\xi\in(0,1)$ and assume $\epsilon>0$. 
Consider first (\ref{item:q0-exp2}), in which case
\[\begin{split}
    \xi\tilde{q}'(x) - \tilde{q}'(x+\epsilon) 
    &= (x+\epsilon) e^{-(x+\epsilon)^2/2} - \xi x e^{-x^2/2} \\
    &\ge xe^{-x^2/2} \left[e^{-\epsilon x - \epsilon^2/2} - \xi\right]
    >0
\end{split}\]
if and only if $x<x_\epsilon\defeq -\frac{\epsilon}{2} -\frac{\log \xi}{\epsilon}$.
Let $\epsilon_*\in(0,1)$ be small enough so that $x_\epsilon>0$ for all
$\epsilon\in(0,\epsilon_*]$, from which one
obtains $c_1>0$ and $0\le a < b <\infty$ such that 
$\xi\tilde{q}'(x) - \tilde{q}'(x+\epsilon) \ge c_1$ for all $x\in[a,b]$
and all $\epsilon\in[0,\epsilon_*]$.
Moreover, for all $\epsilon\in(0,\epsilon_*)$
\[\begin{split}
    \int_0^\infty \!\!\min\{0,\xi\tilde{q}'(x) - \tilde{q}'(x+\epsilon)\} \ud x
    &\ge \int_{x_\epsilon}^\infty 
    xe^{-x^2/2} \left[e^{-\epsilon x - \epsilon^2/2} - \xi\right] \ud
    x 
    \ge - \xi e^{-x_\epsilon^2/2} \\
    &= - \xi e^{-\epsilon^2/8 -\log(\xi)/2}
    e^{-(\log \xi)^2\epsilon^{-2}/2}
    \ge -c_2 e^{-c_3 \epsilon^{-1}}
\end{split}\]
with $c_2=\xi e^{-\log(\xi)/2}$ and $c_3=(\log\xi)^2/2$.

Assume then (\ref{item:q0-poly}).
By the mean value theorem, denoting $c \defeq d+2p$ and $\alpha \defeq
d/2+p+1$,
one can write for some $\epsilon'\in[0,\epsilon]$
\[
    \begin{split}
    \xi\tilde{q}'(x) - \tilde{q}'(x+\epsilon) 
    &\ge c x \left(\frac{1}{(1+(x+\epsilon)^2)^{\alpha}}
    - \frac{\xi}{(1+x^2)^{\alpha}}\right) \\
    &= c x \left(\frac{1-\xi}{(1+(x+\epsilon)^2)^{\alpha}}
    - \frac{2\xi\alpha\epsilon(x+\epsilon')}{(1+(x+\epsilon')^2)^{\alpha+1}}\right) \\
    &\ge \frac{c(1-\xi) x}{(1+(x+\epsilon)^2)^{\alpha}} \left( 1
    - \frac{2\xi\alpha\epsilon}{1-\xi}\left(\frac{1+(x+\epsilon)^2}{1+(x+\epsilon')^2}\right)^{\alpha}\right) 
    > 0
    \end{split}
\]
for all $x>0$, whenever $\epsilon>0$ is sufficiently small. The claim
follows easily.
\end{proof} %}}}

%}}}

\end{document}